\newcommand\blfootnote[1]{
    \begingroup
    \renewcommand\thefootnote{}\footnote{#1}
    \addtocounter{footnote}{-1}
    \endgroup
}
\title{Equivariant Double-Slice Genus, Stabilization, and Equivariant Stabilization}
\author{Malcolm Gabbard}
\begin{document}

\begin{abstract} In this paper we define the equivariant double-slice genus and equivariant super-slice genus of a strongly invertible knot. We prove lower bounds for both the equivariant double-slice genus and the equivariant super-slice genus. Using these bounds we find a family of knots which are double-slice and equivariantly slice, but have equivariant double-slice genus at least $n$. Using this result, we construct unknotted symmetric 2-spheres which do not bound symmetric 3-balls. Additionally, using double-slice and super-slice genera we find effective lower bounds for 1-handle stabilization distance and identify a possible method for using equivariant double-slice and super-slice genera to bound symmetric 1-handle stabilization distance for symmetric surfaces.
    
\end{abstract}

\maketitle

\section{Introduction}

Given a knot $K\subset S^3$, its \textit{double-slice genus} $g_{ds}(K)$ was first defined by Livingston-Meier in \cite{livingston2015doubly} as the minimal genus of an unknotted surface $\Sigma \subset S^4$ such that $\Sigma$ intersects an equatorial $S^3$ transversely with intersection $K$. In 2021, Chen \cite{chen2021lower} was able to construct a lower bound for the double-slice genus using Casson-Gordon invariants which allowed him to prove that there are slice knots with arbitrarily large double-slice genus. A later result by Orson-Powell in \cite{orson2020lower} using a new lower bound for double-slice genus from signatures refined this result finding slice knots with double-slice genus exactly $n$ for all $n$.

The discussion of equivariant genus of symmetric knots originates from Sakuma's work \cite{sakuma1986strongly} on the equivariant concordance group of strongly invertible knots. A \textit{strongly invertible knot} $(K,\tau)$ is a knot $K\subset S^3$ and an involution $\funct{\tau}{S^3}{S^3}$ satisfying $\tau(K)=~K$ and $fix(\tau)=S^1$ intersecting $K$ in two points. The equivariant concordance group of strongly invertible knots has received considerable attention in recent years including \cite{boyle2022equivariant, cha1999equivariant, di2023equivariant, di2023new}, providing many obstructions to strongly invertible knots being equivariantly slice, i.e. bounding a symmetric disk in $B^4$. A more general analysis was recently initiated by Boyle-Issa in \cite{boyle2022equivariant} where they define for a strongly invertible knot $(K,\tau)$ its \textit{equivariant 4-genus} $\tilde{g}_4(K,\tau)$ to be the minimal genus of a properly embedded surface $\Sigma \subset B^4$ with $\partial \Sigma =K$ such that for some extension $\funct{\overline{\tau}}{B^4}{B^4}$ of $\tau$, $\overline{\tau}(\Sigma)=\Sigma$. Using knot Floer homology, Dai-Mallick-Stoffregen \cite{dai2023equivariant} were able to find slice knots which, viewed as strongly invertible knots, have arbitrarily large equivariant 4-genus.\blfootnote{The author was partially supported by NSF grant DMS-1952755.}

Combining these ideas, we define the \textit{equivariant double-slice genus} $\tilde{g}_{ds}(K,\tau)$ of a strongly invertible knot $(K,\tau)$ to be the minimal genus of an equivariantly unknotted surface $\Sigma\subset S^4$ of which $K$ appears as the cross-section. In other words, the minimal genus of a surface $\Sigma$ intersecting $S^3$ transversly with intersection $K$ and bounding a handlebody $H$ such that $\overline{\tau}(H)=H$ for some orientation preserving extension of $\tau$ to $S^4$. In order to differentiate the equivariant double-slice genus from the equivariant 4-genus and the double-slice genus, we prove the following lower bound:

\begin{theorem} \label{thm: Main} Let $(K,\tau)$ be a strongly invertible knot and let $K_0$ and $K_1$ be the knots formed from an arc of $K$ union the half-axis $h_0$ and $h_1$ respectively. Then:
$$\textnormal{min}\{g_{ds}(K_0),g_{ds}(K_1)\}\,\leq \tilde{g}_{ds}(K,\tau).$$
\end{theorem}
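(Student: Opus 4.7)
The plan is to take an equivariant double-slice surface $\Sigma\subset S^4$ realizing $\tilde g_{ds}(K,\tau)$, with $\bar\tau$-invariant handlebody $H$ and $\partial H=\Sigma$, and cut $\Sigma$ and $H$ in half along the fixed sphere $F=\mathrm{Fix}(\bar\tau)$, producing an ordinary unknotted surface $\Sigma'\subset S^4$ of no greater genus whose cross-section with $S^3$ is $K_0$ or $K_1$.

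\textbf{Setup.} Since $\bar\tau$ is orientation-preserving on $S^4$ and extends $\tau$, it preserves each hemisphere $B^4_\pm$, and $F$ is an unknotted $2$-sphere with $F\cap S^3=\mathrm{axis}$. A tangent-space computation at $p_\pm\in K\cap \mathrm{axis}$ shows that $\bar\tau|_\Sigma$ is orientation-reversing: $T_{p_+}\Sigma$ contains $T_{p_+}K$ (a $-1$ eigenvector of $\bar\tau$) and a direction normal to $S^3$ (a $+1$ eigenvector). So in general position, $\Sigma\cap F$ is a disjoint union of circles $C_0,\ldots,C_{m-1}$ with $C_0$ the one containing $p_\pm$, splitting $\Sigma$ into two subsurfaces $\Sigma_+,\Sigma_-$ swapped by $\bar\tau$. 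Likewise $\Delta:=H\cap F$ is a planar subsurface of $F$ with $\partial\Delta=\bigcup_j C_j$, and $\bar\tau$ splits $H$ along $\Delta$ into two $3$-manifolds $H_+,H_-$ with $\partial H_\pm=\Sigma_\pm\cup\Delta$.

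\textbf{Construction.} Set $\Sigma':=\partial H_+=\Sigma_+\cup\Delta\subset S^4$, a closed orientable surface. After arranging the axis to meet $\bigcup_j C_j$ only at $p_\pm$ (so that the half-axes $h_0,h_1$ lie on opposite sides of $C_0$ in $F$, one inside $\Delta$ and the other outside), the cross-section is $\Sigma'\cap S^3=(\Sigma_+\cap S^3)\cup(\Delta\cap\mathrm{axis})=\alpha\cup h_i=K_i$ for some $i\in\{0,1\}$, where $\alpha$ is the arc of $K$ in $\Sigma_+$. An Euler characteristic count, using $\chi(\Sigma_+)=\chi(\Sigma)/2=1-g$ and $\chi(\Delta)=2-m$, gives $g(\Sigma')=(g+m-1)/2$; since $\Sigma_+$ has non-negative genus $g_+=(g-m+1)/2$, we obtain $m\le g+1$ and hence $g(\Sigma')\le g$.

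\textbf{Unknottedness and main obstacle.} To conclude it remains to verify that $\Sigma'$ is unknotted, i.e.\ that $H_+$ is a $3$-dimensional handlebody. The plan here is to equip $H$ with a $\bar\tau$-equivariant handle decomposition (possible via an equivariant Morse function; $H$ has only $0$- and $1$-handles, and these can be chosen either $\bar\tau$-invariant — with fixed $1$-skeleton lying in $\Delta$ — or in $\bar\tau$-paired copies), and to check that the halving descends to a handle decomposition of $H_+$ with only $0$- and $1$-handles. Granting this, $\Sigma'\subset S^4$ is unknotted of genus at most $\tilde g_{ds}(K,\tau)$ with cross-section $K_i$, so $g_{ds}(K_i)\le\tilde g_{ds}(K,\tau)$ and therefore $\min\{g_{ds}(K_0),g_{ds}(K_1)\}\le\tilde g_{ds}(K,\tau)$. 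The hard part is precisely this equivariant handle/handlebody step, together with the care needed to arrange the general-position conditions above.
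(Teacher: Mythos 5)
Your overall strategy is the same as the paper's: cut the minimal equivariant slicing handlebody $H$ in half along the fixed-point set of $\overline{\tau}$, observe that each half has genus at most $g(H)$, and that its boundary meets $S^3$ in an arc of $K$ union a half-axis. Your genus count via Euler characteristics is fine and matches the paper's inequality $g(H)\geq g(H^i)$. However, you have correctly identified, and then left unfilled, the two steps that carry all the content. First, the claim that $\Delta=H\cap\mathrm{Fix}(\overline{\tau})$ \emph{separates} $H$ into two pieces swapped by $\overline{\tau}$ is not automatic for an orientation-reversing involution of a handlebody with two-dimensional fixed set: the vertical involution of the twisted $I$-bundle over a M\"obius band is a solid torus whose fixed set (the M\"obius band zero section) does not separate. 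This possibility must be excluded; the paper does so by noting that the fixed set lies in the fixed $2$-sphere and hence is a union of planar (in particular orientable) surfaces (Lemma \ref{prop: fix}), and then deducing separation piece-by-piece. Second, and more seriously, you assume ``granting this'' that $H_+$ is a handlebody, which is exactly what is needed for $\Sigma'$ to be unknotted in the sense of the definition; your sketch via an equivariant Morse function is only a plan, and it is not clear that an equivariant handle structure of the kind you describe (invariant handles with cores in $\Delta$, plus swapped pairs) exists without further input.

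The paper resolves both points at once by invoking the Kalliongis--McCullough vertical--horizontal decomposition (Theorem \ref{thm: Kall}, specialized in Proposition \ref{prop: decomp}): $H$ decomposes into $\tau$-invariant $I$-bundles over planar surfaces, glued along disks each meeting the fixed set in an arc, on which the action is either a horizontal product action or the vertical reflection. In each such piece the fixed set visibly separates it into two $I$-bundles over planar surfaces (hence handlebodies), and the disk gluings descend to boundary connected sums, so each global half $H^i$ is a handlebody of genus at most $g(H)$. Without this structure theorem (or an equivalent equivariant handle-decomposition result that you would have to prove), your argument does not yet establish that $\Sigma'$ bounds a handlebody, so the inequality $g_{ds}(K_i)\leq\tilde{g}_{ds}(K,\tau)$ is not yet proved. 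As written, the proposal is an accurate outline of the paper's proof with its central lemma missing.
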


This lower bound allows, to some extent, questions about the equivariant double-slice genus of a strongly invertible knot to be answered in terms of the non-equivariant double-slice genus of these other knots $K_0$ and $K_1$, allowing us to make use of the well-studied bounds for double-slice knots. Using this lower bound, we are able to prove that the equivariant double-slice genus of a knot $(K,\tau)$ does not depend on a combination of its double-slice genus and equivariant 4-genus. 
\begin{theorem}\label{thm: Construction}
    The knot $(K_n,\tau)$ depicted in Figure \ref{fig:Main Construction} satisfies the following:
    \begin{enumerate}
        \item $K_n$ is double-slice,
        \item $(K_n,\tau)$ is equivariantly slice,
        \item $\tilde{g}_{ds}(K_n,\tau)\geq n$.
    \end{enumerate}
\end{theorem}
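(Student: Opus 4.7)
The plan is to verify the three conditions in order, treating (1) and (2) by explicit surface/disk constructions read off from Figure~\ref{fig:Main Construction}, and reducing (3) to known lower bounds on the (non-equivariant) double-slice genus via Theorem~\ref{thm: Main}. For (1), I would exhibit an unknotted surface in $S^4$ with equatorial cross-section $K_n$; one expects that the diagram of $K_n$ displays, after isotopy, a connected-sum decomposition $J \# \overline{J}$ (or a similar mirror-symmetric pattern) from which double-sliceness is manifest by rotating one summand across the equator and capping off with standard disks. For (2), the strong inversion $\tau$ in the construction is designed so that a $\tau$-invariant collection of bands resolves $K_n$ to a $\tau$-invariant unlink; this collection of bands together with equivariant capping disks assembles into a $\bar\tau$-equivariant slice disk, establishing equivariant sliceness.

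For (3), I would directly apply Theorem~\ref{thm: Main}: it suffices to bound below the double-slice genera of the two companion knots $K_0$ and $K_1$, obtained by joining an arc of $K_n$ to each half-axis of $\tau$. By the combinatorics of the construction, one should be able to identify $K_0$ and $K_1$ (up to concordance, which preserves $g_{ds}$) with connected sums of the form $\#_{i=1}^n J_i$, where each $J_i$ is a slice knot with known large double-slice obstruction, e.g.\ one of the knots of Orson--Powell or Chen. Invoking either the Casson--Gordon bound of Chen \cite{chen2021lower} or the signature bound of Orson--Powell \cite{orson2020lower}, together with additivity/subadditivity under connected sum of those invariants, should give $g_{ds}(K_0), g_{ds}(K_1) \geq n$, and Theorem~\ref{thm: Main} then yields $\tilde{g}_{ds}(K_n, \tau) \geq n$.

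The main obstacle is the tension between conditions (1)--(2) and condition (3). The former force $K_n$ to be a very ``slice-like'' knot, while the latter requires that cutting $K_n$ along the fixed-point axis and re-gluing with a half-axis introduce genuine topological complexity detectable by double-slice invariants. The construction must therefore place the interesting complexity precisely so that it is cancelled off by the symmetry (accounting for double- and equivariant sliceness) but is revealed once one replaces the $\tau$-orbit of an arc by a half-axis. Verifying that $K_0$ and $K_1$ actually simplify to the intended connected sums will be a careful Kirby-diagram / band-move computation, and this identification is where the bulk of the proof's delicate work will lie.
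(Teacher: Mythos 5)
Your proposal follows essentially the same route as the paper: the paper notes $K_n=\#^{2n}(8_{20}\#-8_{20})$, hence double-slice by Sumners; shows equivariant sliceness by decomposing $K_n$ as an equivariant connect sum of copies of $8_{20}$ (equivariantly slice by Sakuma, being the $(3,-3,2)$ pretzel) and of $D(8_{20})$ (equivariantly slice since $8_{20}$ is slice) rather than via your proposed equivariant band resolution; and for (3) it does exactly what you describe, identifying the half-axis knot for either choice of half-axis with $\#^n 8_{20}$ and invoking the Orson--Powell signature bound $g_{ds}(\#^n 8_{20})=n$ through Theorem \ref{thm: Main}. The only work you leave implicit --- reading the summand $8_{20}$ and the half-axis decomposition off the figure --- is likewise handled in the paper by inspection of Figures \ref{fig:Main Construction} and \ref{fig: Main Decomp}.
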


\begin{figure}[ht]
    \centering
    \includegraphics[width=.4 \linewidth]{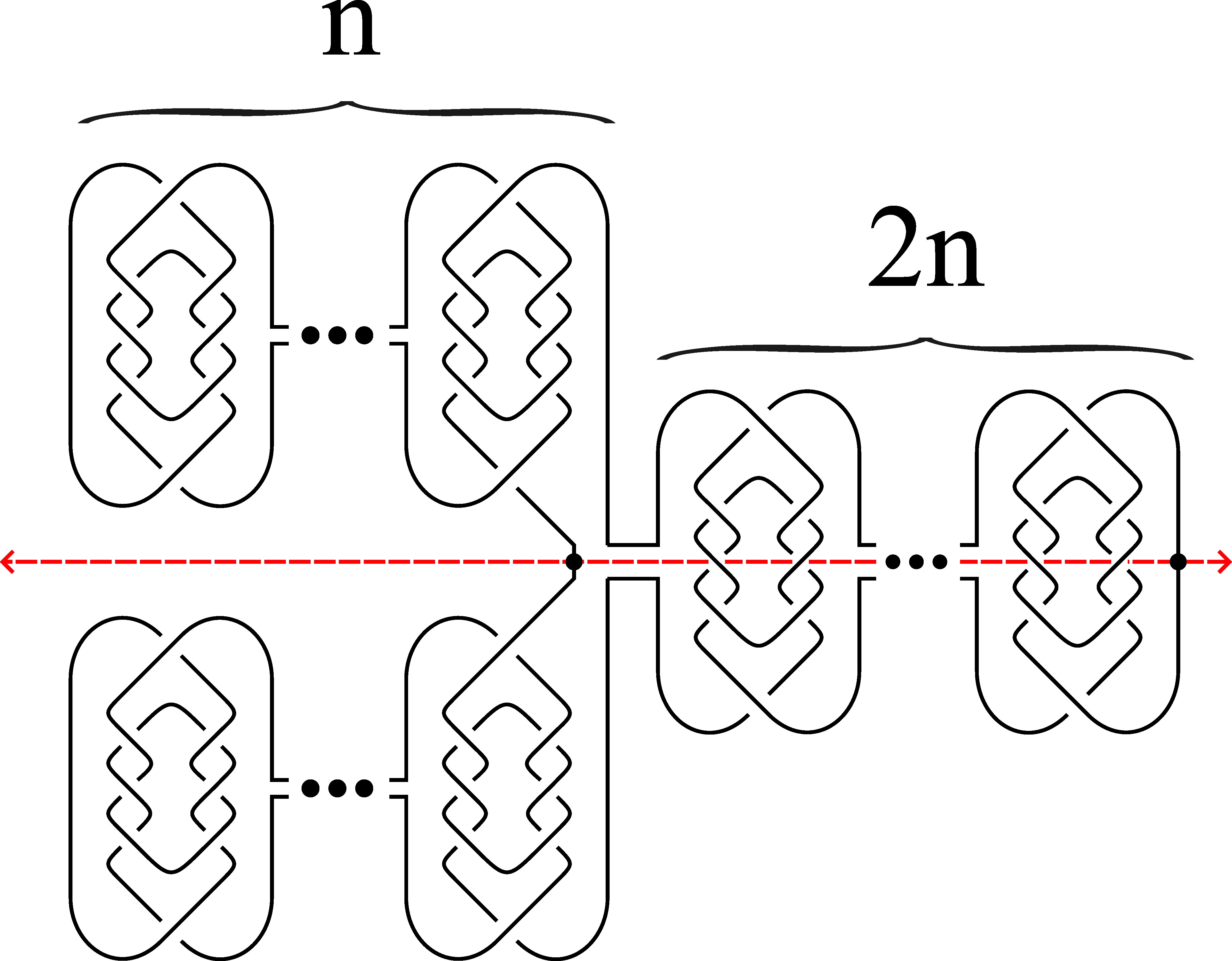}
    \caption{$(K_n,\tau)$}
    \label{fig:Main Construction}
\end{figure}

\subsection{Equivariantly super-slice knots}

Similar to double-slice genus, there is a notion of \textit{super-slice} genus $g_{ss}(K)$ defined by Chen in \cite{chen2021lower} as the minimal genus of a surface $\Sigma \subset B^4$ with boundary $K$ such that the double of $\Sigma$ along $K$ in $S^4$ is unknotted. We extend this discussion to the equivariant setting by defining the \textit{equivariant super-slice genus} $\tilde{g}_{ss}(K,\tau)$ of a knot to be the minimal genus of a surface $\Sigma\subset B^4$ such that $\overline{\tau}(\Sigma)=\Sigma$ and the double of $\Sigma$ is equivariantly unknotted. Using similar techniques to the double-slice setting we are able to prove the following:

\begin{theorem} \label{thm: super-slice Intro} Let $(K,\tau)$ be a strongly invertible knot and let $K_0$ and $K_1$ be the knots formed from an arc of $K$ union the half-axis $h_0$ and $h_1$ respectively. Then:
$$\textnormal{min}\{g_{ss}(K_0),g_{ss}(K_1)\}\,\leq \tilde{g}_{ss}(K,\tau).$$
\end{theorem}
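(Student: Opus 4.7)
The plan is to mirror the proof of Theorem \ref{thm: Main}, carried out in the $4$-ball and then matched with the Main construction after doubling. Let $\Sigma \subset B^4$ realize $\tilde{g}_{ss}(K,\tau) = g$, so that $\partial \Sigma = K$, some extension $\overline{\tau}: B^4 \to B^4$ of $\tau$ satisfies $\overline{\tau}(\Sigma) = \Sigma$, and the double $D\Sigma \subset S^4$ is equivariantly unknotted. The fixed set of $\overline{\tau}$ on $B^4$ is a properly embedded disk $F^{(4)} \subset B^4$ whose boundary is the axis $S^1 = \mathrm{Fix}(\tau) \subset S^3$.

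First, I would arrange (via equivariant transversality and innermost-disk moves in $F^{(4)}$, analogous to the corresponding step in the proof of Theorem \ref{thm: Main}) that $\Sigma \cap F^{(4)}$ is a single arc $\beta$ from $p$ to $q$, the two points of $K \cap S^1$. This arc separates $\Sigma$ into two halves $\Sigma^+$ and $\Sigma^-$ swapped by $\overline{\tau}$, and it separates $F^{(4)}$ into two subdisks $F_0^{(4)}, F_1^{(4)}$ with $\partial F_i^{(4)} = \beta \cup h_i$. Writing $\alpha_0 = \partial \Sigma^+ \cap K$, I set $\Sigma_i := \Sigma^+ \cup_\beta F_i^{(4)} \subset B^4$ for $i \in \{0,1\}$; then $\partial \Sigma_i = \alpha_0 \cup h_i = K_i$, and an Euler-characteristic count bounds $\mathrm{genus}(\Sigma_i) \leq g$.

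To conclude, I would show $\Sigma_i$ is super-slice for $K_i$ by showing its double $D\Sigma_i = D\Sigma^+ \cup DF_i^{(4)} \subset S^4$ is unknotted. The key observation is that this doubled surface is precisely what the proof of Theorem \ref{thm: Main} produces when applied to the equivariant double-slice surface $(D\Sigma, \overline{\tau}_{S^4})$ for $(K,\tau)$: the fixed $2$-sphere of $\overline{\tau}_{S^4}$ on $S^4$ is $F = F^{(4)} \cup F^{(4),\mathrm{mir}}$, the intersection $D\Sigma \cap F$ is the single circle $\beta \cup \beta^{\mathrm{mir}}$, cutting $D\Sigma$ along this circle produces $D\Sigma^\pm$, and cutting $F$ produces $DF_i^{(4)}$. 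The argument of Theorem \ref{thm: Main} then guarantees $D\Sigma_i$ is unknotted, and hence $\Sigma_i$ is super-slice.

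I expect the main obstacle to be the compatibility step: ensuring that the equivariant simplification reducing $\Sigma \cap F^{(4)}$ to the single arc $\beta$ can be performed in $B^4$ in a way that, upon doubling, matches the equivariant simplification one would perform directly on $D\Sigma \cap F$ in $S^4$. Since doubling is a symmetric operation, a $\overline{\tau}$-equivariant innermost-disk isotopy of $\Sigma$ in $B^4$ should double to a $\overline{\tau}_{S^4}$-equivariant isotopy of $D\Sigma$ that additionally respects the $S^3$-reflection, so I expect this compatibility to go through, but it is the step requiring the most technical care.
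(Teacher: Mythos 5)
Your proposal diverges from the paper's argument in a way that introduces real gaps. The paper proves Theorem~\ref{thm: super-slice Intro} by running the proof of Theorem~\ref{thm: ds bound} verbatim with the equivariant slicing handlebody replaced by an equivariant superslicing handlebody: the work happens on the \emph{three-dimensional} handlebody $H\subset S^4$ bounded by the unknotted double, where the Kalliongis--McCullough vertical--horizontal decomposition (Theorem~\ref{thm: Kall}, refined in Proposition~\ref{prop: decomp}) exhibits $H$ as $I$-bundles over \emph{planar} surfaces, and Smith theory on planar surfaces shows the fixed set separates $H$ into two handlebodies $H^1,H^2$ of no larger genus. You instead cut the two-dimensional surface $\Sigma\subset B^4$ along its intersection with the fixed disk $F^{(4)}$, and the steps you describe as ``analogous to the corresponding step in the proof of Theorem~\ref{thm: Main}'' do not actually occur in that proof.

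Two of those steps fail as stated. First, the circles of $\Sigma\cap F^{(4)}=\mathrm{Fix}(\overline{\tau}|_\Sigma)$ cannot be removed by innermost-disk moves: an equivariant isotopy of $\Sigma$ conjugates the involution $\overline{\tau}|_\Sigma$, so the homeomorphism type of its fixed set (in particular the number of fixed circles) is an invariant; eliminating a circle would require equivariant surgery on $\Sigma$, which changes the genus and need not preserve equivariant unknottedness of the double. Second, even granting a single fixed arc $\beta$, there is no reason $\beta$ separates $\Sigma$: a fixed arc (or circle) of an orientation-reversing involution on a positive-genus surface can be non-separating (e.g.\ the diagonal circle of $(x,y)\mapsto(y,x)$ on the torus). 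The paper's separation argument applies only to \emph{planar} surfaces, and it is precisely the $I$-bundle decomposition of the handlebody that reduces to the planar case; your surface $\Sigma$ has no such reduction. Finally, even with both of these repaired, unknottedness of $D\Sigma_i$ still requires producing a handlebody that it bounds, which is exactly the handlebody-splitting content of Theorem~\ref{thm: ds bound}; cutting the surface alone does not supply it. So the proposal is not a self-contained alternative --- it presupposes the paper's main lemma while adding unproven simplification and separation claims on the way to it.
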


\subsection{Symmetric 2-Knots}

Similar to strongly invertible knots, one could ask what are the properties of 2-knots invariant under some symmetry of $S^4$. Using Theorem \ref{thm: super-slice Intro}, we are able construct interesting examples of symmetric 2-knots that distinguish them significantly from both non-equivariant 2-knots and strongly invertible knots. Namely, we are able to prove the following result:

\begin{theorem}\label{thm: 2-sphere}
    There exists a symmetric 2-sphere in $(S^4,\overline{\tau})$ which bounds a 3-ball but bounds no equivariant 3-ball.
\end{theorem}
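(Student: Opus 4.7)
The plan is to realize the desired 2-sphere as the double of an equivariant slice disk for a carefully chosen strongly invertible knot, using Theorem \ref{thm: super-slice Intro} to rule out the existence of an equivariant 3-ball. Concretely, suppose $(K,\tau)$ is a strongly invertible knot admitting an equivariant slice disk $D\subset B^4$ whose double $\Sigma = D\cup_K(-D)\subset S^4$ is unknotted. Under the doubled extension $\overline{\tau}$ of $\tau$ to $S^4$, the 2-sphere $\Sigma$ is symmetric and bounds a 3-ball by hypothesis. If $\Sigma$ additionally bounded an equivariant 3-ball, then $D$ would be a genus-$0$ equivariant surface for $K$ whose double is equivariantly unknotted, forcing $\tilde{g}_{ss}(K,\tau)=0$. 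So it suffices to exhibit $(K,\tau)$ together with such a disk $D$ and with $\tilde{g}_{ss}(K,\tau)\geq 1$.

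To produce such a $(K,\tau)$, I would adapt the classical super-slice construction $J\#-J$ to the equivariant category: starting from an auxiliary strongly invertible knot $(J,\tau_J)$, form an equivariant connected sum or equivariant symmetric union $K = J \#_{eq} rJ$ that carries a canonical symmetric band unknotting. Pushing this band move into $B^4$ yields an equivariant slice disk $D$ whose double is the standard unknotted 2-sphere in $S^4$, giving the equivariant analogue of the usual proof that $J \# -J$ is super-slice.

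For the lower bound $\tilde{g}_{ss}(K,\tau)\geq 1$, apply Theorem \ref{thm: super-slice Intro}: the inequality $\min\{g_{ss}(K_0),g_{ss}(K_1)\}\leq \tilde{g}_{ss}(K,\tau)$ reduces the problem to showing both half-axis closures $K_0$ and $K_1$ have positive super-slice genus. Since super-slice knots are slice, it is enough to pick $(J,\tau_J)$ so that both $K_0$ and $K_1$ are non-slice; one reads the $K_i$ off from a diagram of $K$ and obstructs their sliceness with a standard invariant (Alexander polynomial, Tristram–Levine signatures, or a Casson–Gordon calculation), in the same spirit as in Theorem \ref{thm: Construction}.

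The main obstacle is selecting the building block $(J,\tau_J)$ so that the three requirements coexist: $K$ must admit the desired symmetric super-slice disk, while the half-axis closures $K_0, K_1$ must still carry nontrivial sliceness obstructions. The symmetric super-slice structure is rigid, so the critical technical step is to design $(J,\tau_J)$ — for example, with a trefoil-like summand appearing in each half-axis closure — so that taking a half-axis closure does not kill the sliceness obstruction even though doubling the symmetric disk $D$ unknots $\Sigma$.
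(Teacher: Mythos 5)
Your overall architecture is the same as the paper's: realize the symmetric $2$-sphere as the double of a symmetric ribbon/slice disk for a strongly invertible knot, observe that an equivariant $3$-ball bounded by that sphere would force the relevant equivariant genus to vanish, and then rule this out by the half-axis-closure lower bound. The paper does exactly this, except that it uses the equivariant \emph{double-slice} bound of Theorem \ref{thm: ds bound} rather than the super-slice bound, and it connect-sums its building block $(J,\tau,h)$ with its antipode so that \emph{both} half-axis closures become $6_1\#6_1$, a knot which is slice but not doubly slice.

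The genuine gap is in your construction of the example. You propose $K=J\#_{eq}rJ$ and assert that pushing the canonical symmetric band move into $B^4$ gives an equivariant slice disk whose double is unknotted, calling this ``the equivariant analogue of the usual proof that $J\#-J$ is super-slice.'' But $J\#-J$ is doubly slice, not super-slice: by Theorem \ref{thm: Chen}, quoted in this paper, $g_{ss}(J\#-J)$ is at least the minimal number of generators of the first homology of the double branched cover of $J\#-J$, which is the direct sum of two copies of that of $J$ and hence needs at least two generators whenever $\det J\neq 1$. So for essentially any $J$ carrying the sliceness or super-sliceness obstructions you want to survive into the half-axis closures, \emph{no} slice disk for $J\#-J$ has unknotted double, and the sphere you describe does not exist. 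Your closing paragraph correctly flags choosing the building block as the hard step, but the tension is sharper than a technicality: the super-slice condition on $K$ and the nontriviality of $g_{ss}(K_0),g_{ss}(K_1)$ pull in opposite directions, and you produce no knot resolving it. The paper resolves it by starting from a specific super-slice knot $J$ (Figure \ref{fig:Super Block}) with an explicit symmetric ribbon disk whose double is verified unknotted by elementary ribbon moves, and by demanding only that the half-axis closures fail to be \emph{doubly} slice --- a much weaker condition, met by $6_1\#6_1$ even though $6_1$ is slice. To salvage your version you would need an explicit $J$ with determinant $1$ (so super-sliceness of $J$ is not obstructed) whose half-axis closures nonetheless have positive super-slice genus; as written, the proposal does not supply one.
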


\subsection{Stabilization distance of disks rel boundary}

The internal stabilization distance of embedded surfaces in 4-manifolds is a well-studied area \cite{auckly2019isotopy,Baykur_2015, guth2022exotic,hayden2023atomic, hayden2023one, juhasz2018stabilization,miller2020stabilization, singh2020distances},  which generally asks the following: given two embedded surfaces $\Sigma_1$ and $\Sigma_2$, how many internal stabilizations (additions of 2-dimensional 1-handles to $\Sigma_1$ and $\Sigma_2$) are necessary before $\Sigma_1$ and $\Sigma_2$ are isotopic? In recent work of Miller and Powell \cite{miller2020stabilization}, this question was extended to internal stabilizations of properly embedded surfaces rel boundary. In particular, they define the \textit{1-handle stabilization distance} $d_1(\Sigma_1,\Sigma_2)$ between smoothly and properly embedded genus $g$ surfaces $\Sigma_1,\Sigma_2\subset B^4$ with common boundary $K$ to be the minimal $n\subset \N$ such that $\Sigma_1$ and $\Sigma_2$ become ambiently isotopic rel boundary after each has been stabilized at most $n$ times. 

Using basic properties of double-slice and super-slice genus, we are able to prove the following lower bound for the 1-handle stabilization distance of surfaces satisfying certain conditions:

\begin{theorem}\label{thm: stab}
    Let $\Sigma_1$ and $\Sigma_2$ be properly embedded genus $h$ surfaces with boundary $K\subset S^3$ such that $\Sigma_1\cup_K \Sigma_2 \subset (B^4,\Sigma_1)\cup_{(S^3, K)}(B^4,\Sigma_2)$ is unknotted. Then $d_1(\Sigma_1,\Sigma_2)\geq g_{ss}(K)-h$.
\end{theorem}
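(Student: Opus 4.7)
Set $n = d_1(\Sigma_1,\Sigma_2)$. By the definition of 1-handle stabilization distance, there exist surfaces $\Sigma_1',\Sigma_2' \subset B^4$ with common boundary $K$, each obtained from $\Sigma_i$ by at most $n$ internal 1-handle stabilizations, so that $\mathrm{genus}(\Sigma_i') \leq h+n$, and such that $\Sigma_1'$ and $\Sigma_2'$ are ambiently isotopic in $B^4$ rel $\partial B^4 = S^3$. The plan is to show that $\Sigma_1'$ realizes a super-slice surface for $K$, which will give $g_{ss}(K)\leq h+n$ and hence $d_1(\Sigma_1,\Sigma_2)\geq g_{ss}(K)-h$.

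Decompose $S^4 = B^4_+ \cup_{S^3} B^4_-$ with $\Sigma_1'\subset B^4_+$ and $\Sigma_2'\subset B^4_-$, and let $D(\Sigma_1')\subset S^4$ denote the double of $\Sigma_1'$, obtained by gluing $\Sigma_1'$ to its mirror copy in $B^4_-$. The rel-boundary isotopy in $B^4$ between $\Sigma_1'$ and $\Sigma_2'$ produces an ambient isotopy of $B^4_-$ rel $\partial B^4_-$ carrying the mirror copy of $\Sigma_1'$ to $\Sigma_2'$; extending this isotopy by the identity on $B^4_+$ gives an ambient isotopy of $S^4$ taking $D(\Sigma_1')$ to $\Sigma_1'\cup_K\Sigma_2'$. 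It therefore suffices to show that $\Sigma_1'\cup_K \Sigma_2'$ is unknotted in $S^4$.

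By hypothesis, $\Sigma_1\cup_K\Sigma_2\subset S^4$ is unknotted, so it bounds a three-dimensional handlebody $V\subset S^4$. The surface $\Sigma_1'\cup_K\Sigma_2'$ is obtained from $\Sigma_1\cup_K\Sigma_2$ by a sequence of at most $2n$ internal 1-handle stabilizations inside $B^4_+$ or $B^4_-$ (namely those used to form $\Sigma_i'$ from $\Sigma_i$), so the problem reduces to the claim that internal 1-handle stabilization of an unknotted closed surface in $S^4$ is again unknotted: after an isotopy placing each stabilization arc $\gamma$ into the exterior of $V$, attaching a three-dimensional 1-handle to $V$ along a thickening of $\gamma$ produces a handlebody of one higher genus whose boundary is the stabilized surface.

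The main obstacle is verifying the preservation of unknottedness under internal stabilization, in particular that the stabilization arc can always be isotoped off $V$ while remaining in the half-ball $B^4_\pm$ in which the stabilization is performed. While plausible given the flexibility of arcs in four-manifolds, the constraint of staying in one half-ball warrants care, possibly via a Hosokawa--Kawauchi type uniqueness statement for unknotted surfaces in $S^4$. Once this is established, combining the above steps yields $g_{ss}(K) \leq \mathrm{genus}(\Sigma_1') \leq h + n$, completing the proof.
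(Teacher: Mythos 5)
Your proposal follows essentially the same route as the paper: stabilize both surfaces $d_1(\Sigma_1,\Sigma_2)$ times, observe that the closed surface $\Sigma_1'\cup_K\Sigma_2'$ remains unknotted because it is the original unknotted surface with tubes attached, and use the rel-boundary isotopy to identify this closed surface with the double of $\Sigma_1'$, which is therefore a superslicing surface for $K$. Two remarks. First, the step you flag as the main obstacle --- that attaching an ambient 1-handle to a surface bounding a handlebody $V$ yields a surface bounding a handlebody --- is exactly the step the paper also disposes of in one clause (``it is an unknotted handlebody union handles''), so you have not lost any rigor relative to the paper; the genuine content is isotoping the guiding arcs off $V$ rel endpoints in the complement of the surface, and your additional worry that the arc must remain in one half-ball is not needed here, since unknottedness only requires an embedded handlebody somewhere in $S^4$ (only the surface stabilization itself, not the 3-dimensional handle, is constrained to a half-ball). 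Second, watch the normalization of $g_{ss}$: the paper defines $g_{ss}(K)$ as the genus of the closed doubled surface, i.e.\ twice Chen's original quantity, and with that convention the argument gives $h+d\geq \frac{g_{ss}(K)}{2}$, hence $d_1(\Sigma_1,\Sigma_2)\geq \frac{g_{ss}(K)}{2}-h$, which is what the paper's own proof concludes (and which differs by a factor of two from the theorem as printed). Your final inequality $g_{ss}(K)\leq \mathrm{genus}(\Sigma_1')\leq h+n$ is valid only under Chen's convention in which $g_{ss}$ is the genus of the half $F$; under the paper's convention the double of $\Sigma_1'$ has genus $2(h+n)$, so you should state explicitly which normalization you are using.
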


By letting $K$ be double-slice this immediately yields the following corollary:

\begin{corollary}\label{cor: stab}
    Let $K$ be double-slice with $g_{ss}(K)=n$, then $K$ admits slice disks $D_1$ and $D_2$ such that $d_1(D_1,D_2)\geq n$.
\end{corollary}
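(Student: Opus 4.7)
The plan is to apply Theorem \ref{thm: stab} directly, taking the surfaces in that statement to be genus-zero slice disks. Since $K$ is double-slice by hypothesis, the definition supplies a pair of properly embedded disks $D_1, D_2 \subset B^4$ with common boundary $K$ such that $D_1 \cup_K D_2$ is an unknotted 2-sphere in the resulting $S^4 = B^4 \cup_{S^3} B^4$. These disks satisfy exactly the hypothesis of Theorem \ref{thm: stab} with $h = 0$.

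Applying Theorem \ref{thm: stab} to this pair then yields
\[
d_1(D_1, D_2) \;\geq\; g_{ss}(K) - 0 \;=\; n,
\]
which is the desired conclusion.

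There is essentially no obstacle here: the only content is recognizing that the hypothesis ``$\Sigma_1 \cup_K \Sigma_2$ is unknotted'' in Theorem \ref{thm: stab} is precisely the condition that produces $D_1$ and $D_2$ from the double-slice hypothesis on $K$, and that genus-zero slice disks fit into that theorem with $h = 0$. The only thing one might want to double-check is that the notion of ``unknotted'' used in the statement of Theorem \ref{thm: stab} really agrees with the unknottedness of the 2-sphere $D_1 \cup_K D_2$ provided by the definition of double-slice (i.e.\ bounding a 3-ball in $S^4$), but this is built into the definitions at the start of the introduction.
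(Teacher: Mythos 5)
Your proof is correct and is exactly the paper's argument: the corollary is presented as an immediate consequence of Theorem \ref{thm: stab}, obtained by taking $\Sigma_1,\Sigma_2$ to be the two genus-zero slice disks whose union is the unknotted sphere provided by a double-slicing of $K$, so that $h=0$. (One caveat you inherit from the paper rather than introduce yourself: the proof of Theorem \ref{thm: stab} given in Section 6 actually establishes the bound $d_1(\Sigma_1,\Sigma_2)\geq \frac{g_{ss}(K)}{2}-h$, which would give $d_1(D_1,D_2)\geq \frac{n}{2}$ here, so the discrepancy lies in the stated theorem, not in your deduction from it.)
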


From here, we define a symmetric notion of 1-handle stabilization distance $\tilde{d}^\tau_1(\Sigma_1,\Sigma_2)$ for certain classes of symmetric surfaces and prove the following symmetric analog of Theorem \ref{thm: stab}.

\begin{theorem}\label{thm: eq stabilization}
    Let $\Sigma_1,\Sigma_2\subset B^4$ be properly embedded genus $h$ surfaces with boundary $K$ which are both $\overline{\tau}$-invariant. If $\Sigma_1\cup_K -\Sigma_2 \subset (B^4,\Sigma_1)\cup_{(S^3,K)}(B^4,\Sigma_2)$ is equivariantly unknotted, then $\tilde{d}_1^\tau(\Sigma_1,\Sigma_2)\geq \frac{\tilde{g}_{ss}(K,\tau)}{2}-h$.
\end{theorem}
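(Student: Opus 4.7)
The plan is to adapt the proof strategy of Theorem~\ref{thm: stab} to the equivariant setting. Suppose $\tilde{d}^\tau_1(\Sigma_1,\Sigma_2)=n$, so that after at most $n$ symmetric $1$-handle stabilizations on each of $\Sigma_1$ and $\Sigma_2$ we obtain $\overline{\tau}$-invariant surfaces $\Sigma'_1,\Sigma'_2\subset B^4$ which are ambiently isotopic rel boundary through an $\overline{\tau}$-equivariant isotopy.

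The first step is to verify that the closed surface $\Sigma'_1\cup_K -\Sigma'_2\subset S^4$ is still equivariantly unknotted. By hypothesis the un-stabilized surface $\Sigma_1\cup_K -\Sigma_2$ bounds a handlebody $H$ invariant under an extension of $\overline{\tau}$. Each symmetric $1$-handle stabilization on $\Sigma_i$ corresponds dually to drilling an $\overline{\tau}$-related pair of tunnels out of $H$; the complement stays a handlebody and its $\overline{\tau}$-invariance is preserved. Thus $\Sigma'_1\cup_K -\Sigma'_2$ bounds an $\overline{\tau}$-invariant handlebody and is equivariantly unknotted.

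Next, I would use the equivariant isotopy rel $K$ from $\Sigma'_1$ to $\Sigma'_2$: extending it by the identity on the opposite $B^4$ produces an $\overline{\tau}$-equivariant ambient isotopy of $S^4$ carrying $\Sigma'_1\cup_K -\Sigma'_2$ to $\Sigma'_1\cup_K -\Sigma'_1$, which is precisely the double of $\Sigma'_1$. Equivariant unknottedness is preserved along the isotopy, so the double of $\Sigma'_1$ is equivariantly unknotted and $\Sigma'_1$ realizes an equivariant super-slice surface for $(K,\tau)$. Since each symmetric stabilization attaches an $\overline{\tau}$-related pair of $1$-handles and so adds $2$ to the genus, we have $\tilde{g}_{ss}(K,\tau)\leq h+2n$; rearranging (and absorbing an additive $h/2$ of slack) yields the stated bound $n\geq \frac{\tilde{g}_{ss}(K,\tau)}{2}-h$.

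I expect the main obstacle to be the first step: verifying that a symmetric $1$-handle stabilization really does correspond to an $\overline{\tau}$-equivariant modification of the bounding handlebody $H$. Non-equivariantly this is essentially tautological, but equivariantly one must deal carefully with the possibility that the stabilizing handle pair interacts with the fixed-point set of $\overline{\tau}$, and ensure that the dual drilling operation on $H$ produces a handlebody rather than a more general compression body. This is presumably why $\tilde{d}^\tau_1$ is only defined on the \emph{certain classes} of symmetric surfaces mentioned in the paragraph preceding the theorem, and the definition should be arranged so that exactly those symmetric stabilizations for which the equivariant dualization goes through are allowed.
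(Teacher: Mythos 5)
Your proposal matches the paper's approach: the paper gives no separate argument for this theorem, stating only that ``the proof ... follows exactly as in the non-equivariant setting,'' and your adaptation of the proof of Theorem~\ref{thm: stab} (stabilized double still bounds an invariant handlebody, equivariant isotopy rel boundary turns it into the double of $\Sigma_1'$, then count genus) is exactly that adaptation, including the correct observation that the delicate point is keeping the bounding handlebody invariant under the added handles. One bookkeeping correction: the paper's definition of $\tilde{d}_1^\tau$ counts individual handles, not $\overline{\tau}$-related pairs, so with $n$ handles the surface $\Sigma_1'$ has genus $h+n$ and its double has genus $2(h+n)\geq \tilde{g}_{ss}(K,\tau)$, which gives $n\geq \tfrac{\tilde{g}_{ss}(K,\tau)}{2}-h$ directly; your intermediate inequality $\tilde{g}_{ss}(K,\tau)\leq h+2n$ forgets to double the genus of $\Sigma_1'$, and under your pairs-of-handles reading the correct count $\tilde{g}_{ss}(K,\tau)\leq 2h+4n$ would only yield the weaker bound $n\geq \tfrac{\tilde{g}_{ss}(K,\tau)}{4}-\tfrac{h}{2}$.
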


\subsection{Organization} Section 2 provides an overview of notation and conventions, as well as necessary background on double-slice genus and equivariant 4-genus. Section 3 defines the equivariant double-slice genus and covers some properties of involutions on handlebodies arising in this setting. In Section 4, we bound the equivariant double-slice genus, proving Theorem \ref{thm: Main} as well as Theorem \ref{thm: Construction}. In Section 5, we define equivariant super-slice genus and prove analogous results to Section 4, as well as results about equivariantly knotted 2-spheres.  In Section 6, we discuss stabilization of surfaces rel boundary and introduce a notion of equivariant stabilization.

\subsection{Acknowledgments} Many thanks to Dave Auckly for countless helpful conversations and Evan Scott for insights which helped find a mistake in an earlier version of the paper.

\section{Background}

In this section, we recall the necessary background pertaining to double-slice classical knots and equivariantly slice strongly invertible knots. 

\subsection{Conventions and classical 4-genus}

A knot $K$ will refer to a classical knot, namely the oriented image of a smooth embedding of $S^1$ into $S^3$. From a given knot $K$ we have the following related knots:

\begin{itemize}
    \item $rK$ is the \textit{reverse} of a knot $K$, that is, $K$ with the opposite orientation,
    \item $mK$ is the \textit{mirror} of a knot $K$, which is the image of $K$ under a reflection of $S^3$,
    \item $-K$ is the \textit{inverse} of a knot $K$, and is the reverse of the mirror.
\end{itemize}

Recall the \textit{4-genus} $g_4(K)$ of a knot $K$ is the minimal genus of smooth properly embedded surface $\Sigma \subset B^4$ with $\partial\Sigma = K$. If $g_4(K)=0$, we say $K$ is \textit{slice}. One well-known fact we will use consistently is that given a knot $K$, $K\# -K$ is slice. 

\subsection{Double-slice genus} 

The \textit{double-slice genus} of a knot $K$, denoted $g_{ds}(K)$, was defined by Livingston and Meier \cite{livingston2015doubly} as the minimal genus of an unknotted smooth surface $\Sigma \subset S^4$ such that the intersection of $\Sigma$ with an equatorial $S^3$ is $K$. By unknotted, we mean that the surface $\Sigma$ bounds a smoothly embedded handlebody in $S^4$. If the double-slice genus of $K$ is 0, i.e. $K$ is the cross-section of an unknotted sphere, we say $K$ is \textit{double-slice}. One important fact we know from Sumners \cite{sumners1971invertible} is that given any knot $K$, $K\# -K$ is double-slice. 

There are many obstructions to knots being double-slice, and a good survey is provided in \cite{livingston2015doubly}. In this paper, the primary bound for double-slice genus we will use is Theorem 1.1 of \cite{orson2020lower} from Orson and Powell, which states:

\begin{theorem}[Orson and Powell] Let $K$ be a knot in $S^3$ and $\sigma_\omega(K)$ be its signature function, then:
$$ g_{ds}(K) \geq \textnormal{max} | \sigma_\omega(K) |$$
\end{theorem}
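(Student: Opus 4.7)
The plan is to deduce the bound from the classical Tristram--Levine signature inequality applied independently to each half of a doubly slicing surface.

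First I would recall the Tristram--Levine bound: if a knot $J\subset S^3$ bounds a smoothly and properly embedded oriented surface of genus $g$ in $B^4$, then for every $\omega\in S^1\setminus\{1\}$ at which $\sigma_\omega$ is continuous (equivalently, $\omega$ is not a root of the Alexander polynomial $\Delta_J$) one has $|\sigma_\omega(J)|\le 2g$. This is standard, and follows from representing $\sigma_\omega$ via the Hermitian form $(1-\omega)V+(1-\bar\omega)V^T$ for a Seifert matrix $V$ and comparing with the intersection form on an appropriate twisted cover of $B^4$ branched along the bounding surface.

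Next, suppose $\Sigma\subset S^4$ is an unknotted surface of genus $g=g_{ds}(K)$ meeting the equatorial $S^3$ transversely in $K$. Cutting along $S^3$ produces properly embedded oriented surfaces $\Sigma_+\subset B^4_+$ and $\Sigma_-\subset B^4_-$, each with boundary an oriented copy of $K$, of genera $g_+$ and $g_-$. A short Euler-characteristic computation gives $g_+ + g_- = g_{ds}(K)$; note that the unknottedness hypothesis on $\Sigma$ plays no role in this equality.

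Applying Tristram--Levine to each half yields, for every $\omega$ away from the roots of $\Delta_K$,
$$|\sigma_\omega(K)|\le 2g_+ \quad\text{and}\quad |\sigma_\omega(K)|\le 2g_-,$$
where we use that $\sigma_\omega$ is invariant under reversing the orientation of $K$ so that both halves do contribute bounds for the same function. Consequently $|\sigma_\omega(K)|\le 2\min(g_+,g_-)\le g_+ + g_- = g_{ds}(K)$, and a standard limiting/semicontinuity argument extends the inequality to the finitely many jump points of $\sigma_\omega$. Taking the supremum over $\omega$ gives the claim.

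The only genuinely delicate step is the treatment of $\sigma_\omega$ at its jumps, where the raw Tristram--Levine inequality can be off by one and conventions matter; this is the step where I would expect to spend the most care, though it is essentially routine bookkeeping. Modulo that, the theorem is a direct double application of the classical $4$-genus signature inequality, coupled with the observation that $g_{ds}(K)$ is the sum of the genera of the two halves of any doubly slicing surface.
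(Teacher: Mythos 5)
This theorem is quoted in the paper from Orson--Powell \cite{orson2020lower} without proof, so the comparison is really with their argument. Your proposal has a genuine gap, and it sits exactly where you wave it away as ``routine bookkeeping'': the behaviour of $\sigma_\omega$ at its jump points. The Tristram--Levine inequality $|\sigma_\omega(K)|\le 2g_4(K)$ holds only at $\omega$ that are not roots of the Alexander polynomial, and the value of $\sigma_\omega$ \emph{at} a root is not controlled by its one-sided limits: when the Hermitian form $(1-\omega)V+(1-\bar\omega)V^T$ degenerates, eigenvalues crossing zero can make $\sigma_{\omega_0}$ strictly larger in absolute value than both limits $\sigma_{\omega_0^\pm}$. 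No semicontinuity or limiting argument recovers the jump values, and for the applications in this paper the jump values are the whole point: $8_{20}$ is slice, so $\sigma_\omega(8_{20})=0$ at every continuity point, yet $\max|\sigma_\omega(8_{20})|=1$ is attained at a root of $\Delta_{8_{20}}$, and this is what forces $g_{ds}(\#^n 8_{20})=n$ in Example \ref{example: 8_20}.

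There is a clean sanity check showing your argument cannot be repaired as stated: you observe yourself that unknottedness of $\Sigma$ plays no role in your proof. But then the same argument would apply to the (possibly knotted) genus-$0$ sphere obtained by doubling a ribbon disk for $8_{20}$ across the equator, yielding $\max|\sigma_\omega(8_{20})|\le 0$, which is false. So any correct proof must use the unknotting handlebody. That is what Orson--Powell actually do: generalizing Sumners' theorem that doubly slice knots have \emph{hyperbolic} Seifert forms, they extract from the two halves of the unknotted surface two isotropic subspaces of the Seifert form whose codimensions are controlled by $g_+$ and $g_-$ and which interact (they are complementary in an appropriate sense), and then a linear-algebra lemma bounds the signature of a possibly \emph{degenerate} Hermitian form admitting such a pair. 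A single metabolizer per half, which is all Tristram--Levine sees, only kills the signature where the form is nonsingular; it takes the pair to control it at the singular $\omega$.
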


One important application of this theorem is the following example coming from Theorem 1.2 of \cite{orson2020lower} which we will refer to in future constructions:

\begin{example}\label{example: 8_20}
    Let $L$ be the knot denoted $8_{20}$ in Rolfsen's table \cite{rolfsen2003knots} and let $L_n = \#^n L$ as depicted in Figure \ref{fig:8_20_sum}. In \cite{orson2020lower} it is shown, using the additivity of the knot signature function, that $g_{ds}(L_n)=n$, despite the fact that $L_n$ is slice.
\end{example}

\begin{figure}[h]
    \centering
    \includegraphics[width = .3\linewidth]{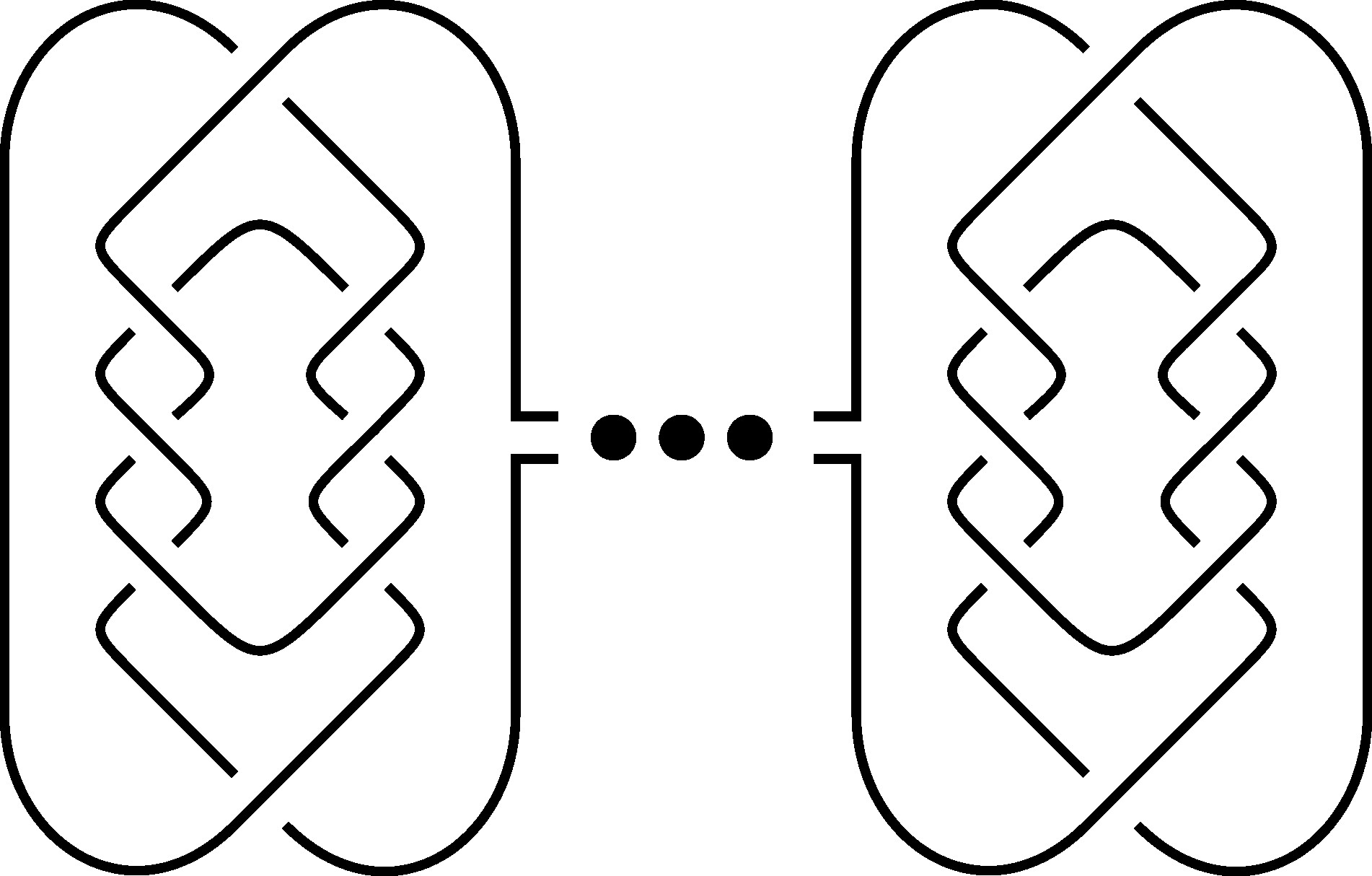}
    \caption{$L_n$, i.e. $n$ copies of $8_{20}$ summed together.}
    \label{fig:8_20_sum}
\end{figure}

\subsection{Equivariant 4-genus} \label{sec: Equivariant 4-Genus} To define the equivariant 4-genus, we first recall the definition of a strongly invertible knot. A \textit{strongly invertible knot} is a pair $(K,\tau)$ consisting of a classical knot $K$ and an involution $\tau$ acting on $S^3$ such that fix$(\tau)=~S^1$ and fix$(\tau)\cap K$ is two points. For a given strongly invertible knot $(K,\tau)$, we refer to the fixed point set as the \textit{axis} and refer to the two intervals of the axis separated by the intersection with $K$ as \textit{half-axis}. 

As it will often be useful to work with a specified half-axis, we recall that a \textit{directed strongly invertible knot} is a triple $(K,\tau,h)$ consisting of a strongly invertible knot $(K,\tau)$ and an oriented half-axis denoted by $h$. Given a directed strongly invertible knot $(K,\tau, h)$, its \textit{antipode} is the knot $a(K,\tau,h)=(K,\tau,h')$, where $h'$ is the other choice of half-axis; an example is shown in Figure \ref{fig: Antipode}.

\begin{figure}
\begin{tabular}{p{.35\textwidth}p{.35\textwidth}}
    \begin{center}\includegraphics[width=.6\linewidth]{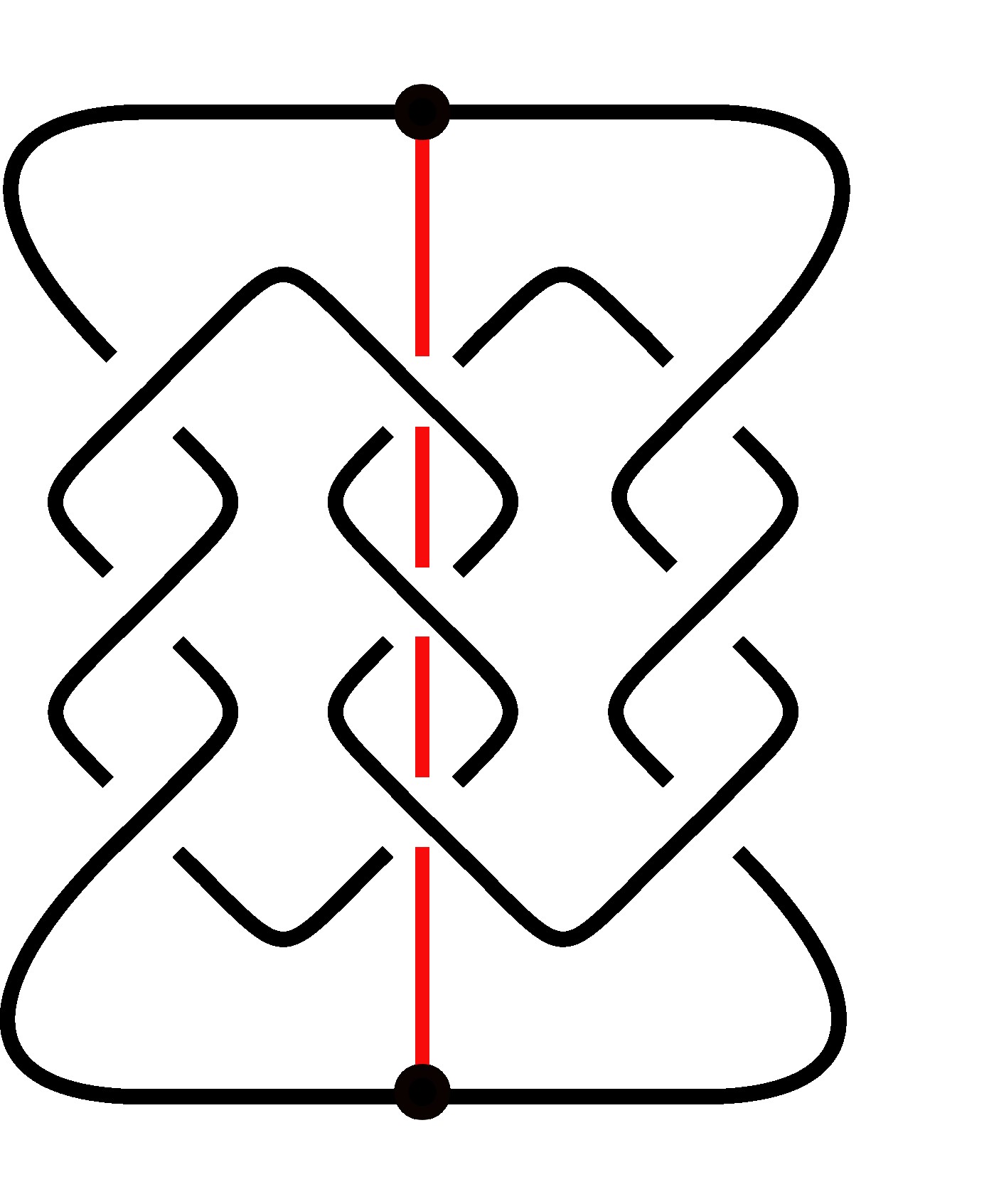}\end{center} &  \begin{center}\includegraphics[width=.6\linewidth]{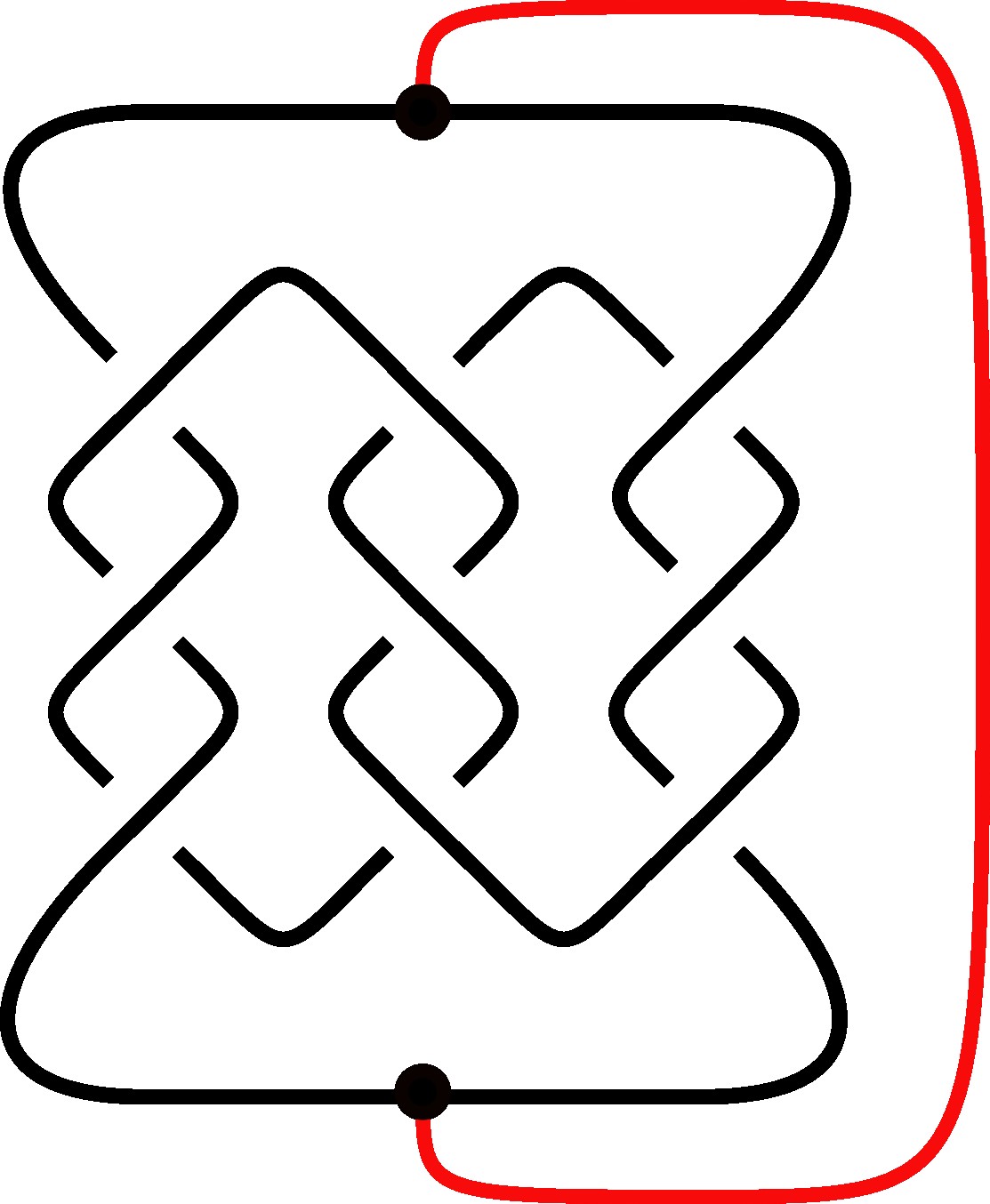} \end{center}
    \end{tabular}
\caption{A strongly invertible knot ($9_{46},\tau,h$) and its antipode.}
\label{fig: Antipode}
\end{figure}

Given a strongly invertible knot $(K,\tau)$, Boyle and Issa define in \cite{boyle2022equivariant} a notion of equivariant 4-genus $\tilde{g}_4(K,\tau)$, which we recall here. 

\begin{definition}
    Given a strongly invertible knot $(K,\tau)$ in $S^3$, an \textit{equivariant surface} for $(K,\tau)$ is a connected, smoothly properly embedded surface $F\subset B^4$ with $\partial F=K\subset \partial B^4$ such that $\overline{\tau}(F)=F$, for $\funct{\overline{\tau}}{B^4}{B^4}$ a smooth extension of $\tau$.
\end{definition}

\noindent From here we define the \textit{equivariant 4-genus} as in \cite{boyle2022equivariant}:

\begin{definition}
    The \textit{equivariant 4-genus} of a strongly invertible knot $(K,\tau)$ is the minimal genus of an orientable equivariant surface for $(K,\tau)$, denoted $\tilde{g}_4(K,\tau)$. When clear from context, we may instead write $\tilde{g}_4(K)$. 
\end{definition}

Given a strongly invertible knot $(K,\tau)$, we have that $(K\# -K,\tau)$ is equivariantly slice, where the connected sum is an equivariant connect sum banding together two directed strongly invertible knots with a symmetric band consistent with the orientations on the half-axis. For a more explicit description of the equivariant connect sum see \cite{sakuma1986strongly}. Another common operation we will use to construct strongly invertible knots is the \textit{equivariant double} of a knot $K$, denoted $D(K)$. 

\begin{figure}
    \centering
    \includegraphics[width = .45\linewidth]{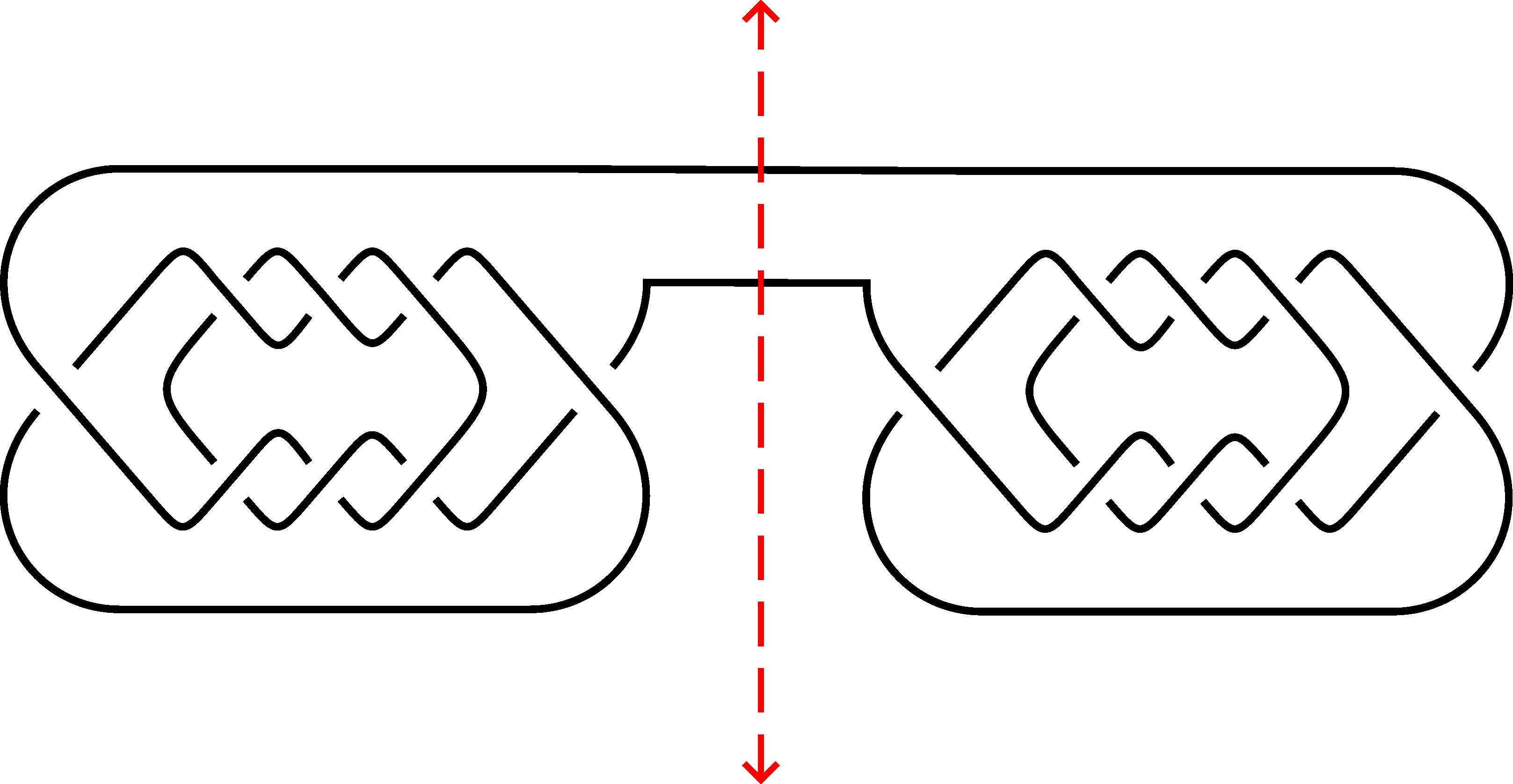}
    \caption{The equivariant double of $8_{20}$.}
    \label{fig:D820}
\end{figure}

The double of a knot $K$ is a strongly invertible knot $(D(K),\tau)$ where $D(K)=K\# rK$ and $\tau$ is the involution taking $K$ to $rK$ and vice-versa, as shown in Figure \ref{fig:D820}. Of importance to our genus bounds is the following immediate result:

\begin{proposition} \label{prop: double}
    Given a knot $K$ with 4-genus $g_4(K)=n$, its double $(D(K),\tau)$ has equivariant 4-genus $\tilde{g}_4(D(K))\leq 2n$.
\end{proposition}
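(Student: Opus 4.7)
The plan is to exhibit an explicit $\overline{\tau}$-invariant surface $\Sigma \subset B^4$ of genus $2n$ with boundary $D(K)$, built by taking two disjoint copies of a genus-$n$ slice surface for $K$ and joining them with an equivariant band. The hypothesis $g_4(K) = n$ provides a smooth properly embedded surface $F \subset B^4$ of genus $n$ with $\partial F = K$, which is the only input.

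First I would model the target $B^4$ as the product $B^3 \times I$ and arrange $D(K) = K \# rK$ in $B^3 \times \{0\}$ so that $K$ lies in a 3-ball $B_+ \subset B^3$, $rK$ lies in the disjoint 3-ball $B_- = \tau(B_+)$, and the symmetric connect sum band lies in the axis region between them; this is precisely the standard picture implicit in the definition of the equivariant double. I then extend $\tau$ to $\overline{\tau}(x,t) = (\tau(x), t)$, an orientation-preserving smooth involution whose fixed set is $\textnormal{fix}(\tau) \times I$. Inside the collar $B^3 \times [0, \varepsilon]$, I would attach to the cylinder $D(K) \times [0, \varepsilon]$ an $\overline{\tau}$-invariant band whose core is the equivariant connect sum band; this produces a $\overline{\tau}$-equivariant pair-of-pants cobordism from $D(K)$ at level $0$ to the split link $K \sqcup rK$ at level $\varepsilon$, with $K \subset B_+ \times \{\varepsilon\}$ and $rK \subset B_- \times \{\varepsilon\}$. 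Then I would embed a rescaled copy of $F$ inside $B_+ \times [\varepsilon, 1]$ bounding $K$; its image $\overline{\tau}(F) \subset B_- \times [\varepsilon, 1]$ is a genus-$n$ surface bounding $rK$, and it is automatically disjoint from $F$ because $B_+$ and $B_-$ are disjoint.

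The union $\Sigma$ of the pair of pants with $F \cup \overline{\tau}(F)$ is then a connected, $\overline{\tau}$-invariant, properly embedded surface with $\partial \Sigma = D(K)$. Attaching a single 1-handle between two genus-$n$ surfaces, each with one boundary component, produces a surface of genus $2n$ (easily confirmed by an Euler characteristic count), yielding the stated bound $\tilde{g}_4(D(K)) \leq 2n$. The only potential obstacle is ensuring that $F$ and $\overline{\tau}(F)$ are disjoint in the interior, which the product extension combined with the disjoint half-ball arrangement for the equivariant connect sum handles directly; the rest of the argument is routine cut-and-paste.
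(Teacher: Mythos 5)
Your construction is correct and is exactly the argument the paper has in mind: the paper states this proposition as an ``immediate result'' with no written proof, and the intended justification is precisely your equivariant doubling of a minimal genus slice surface for $K$ joined to its image under $\overline{\tau}$ by a single invariant band, with the Euler characteristic count giving genus $2n$. Nothing is missing.
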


\section{Equivariant double-slice genus}
\label{sec: EDSG}
Having defined both double-slice genus and equivariant 4-genus, we can now define the logical combination of the two, which we call the \textit{equivariant double-slice genus}. In order to do this, we first discuss extensions of $\tau$ to $S^4$. 

We divide the possible extension of $\tau$ into two categories: orientation preserving and orientation reversing. By Smith \cite{smith1941fixed} the fixed point set of these extensions will either be $S^2$ or $S^1$, depending on if it is orientation preserving or reversing, respectively. We will refer to an orientation preserving extension of $\tau$ to $S^4$ as $\overline{\tau}$, which will be the primary focus of this paper. We discuss orientation reversing extensions briefly at the end of this section.

With this in mind, we now define an \textit{equivariant slicing surface} and an \textit{equivariant slicing handlebody}: 

\begin{definition}
    Given a directed strongly invertible knot $(K,\tau,h)\subset S^3$ and a smooth extension $\overline{\tau}$ of $\tau$ to $S^4$, an \textit{equivariant slicing handlebody} of $(K,\tau,h)$ is a handlebody $H\subset S^4$ such that $\overline{\tau}(H)=H$ and $H$ intersects the standard $S^3$ containing $K$ transversely with $h\subset H$ and $\partial H \cap S^3 = K$. We call $\partial H$ an \textit{equivariant slicing surface} and say that $(K,\tau,h)$ \textit{divides} $\partial H$.
\end{definition}

\begin{remark} \label{rem: orientation}
    Since $H\cap S^3$ contains an arc $h$ of the fixpoint set, $\overline{\tau}$ preserves the tangent vector to $h$ at a point of $h$. Since it setwise fixes the hemispheres of $S^4$, it also fixes the normal vector to $S^3$ at that point. Furthermore, the fixed point set is two-dimensional, so we conclude that $\overline{\tau}|_H$ is orientation reversing.
\end{remark}

\begin{remark}Given the equivariant connect sum of two directed strongly invertible knots, as defined in the previous section, we get a natural equivariant connect sum of equivariant slicing surfaces which we will make use of in future constructions.     
\end{remark}

To see that an equivariant slicing surface exists, we take an equivariant Seifert surface $F\subset S^3$ for $(K,\tau,h)$, guaranteed to exist by \cite{boyle2022equivariant}, and consider $F\times [-1,1]\subset~S^4$, as in the non-equivariant setting described in \cite{livingston2015doubly}. $F\times [-1,1]$ is then an equivariant slicing handlebody for $(K,\tau,h)$. We say a surface is \textit{equivariantly unknotted} if it bounds an equivariant handlebody, but do not require the possibly stronger condition that it is equivariant isotopic to some standard embedding. With this in mind, we define the \textit{equivariant double-slice genus}:

\begin{definition}
    The \textit{equivariant double-slice genus} of a directed strongly invertible knot $(K,\tau,h)$, which we denote $\tilde{g}_{ds}(K,\tau,h)$, is the minimal genus of an equivariant slicing handlebody for $(K,\tau,h)$. If $\tilde{g}_{ds}(K,\tau,h)=0$, we say $(K,\tau,h)$ is \textit{equivariantly double-slice}.
\end{definition}

It is clear that the equivariant 4-genus of a directed strongly invertible knot is equal to that of its antipode, as any symmetric surface one bounds is also bounded by the other. It is less clear, and currently unknown to the author, if the same is true for the equivariant double slice genus.

\begin{question}\label{question: antipode}
    Given a directed strongly invertible knot and its antipode, does $\tilde{g}_{ds}(K,\tau,h)=\tilde{g}_{ds}(a(K,\tau,h))$?
\end{question} 

Because this is unknown, we define the \textit{equivariant double-slice genus} of a non-directed strongly invertible knot $(K,\tau)$ to be the minimum of $\tilde{g}_{ds}(K,\tau,h)$ and $\tilde{g}_{ds}(a(K,\tau,h))$. 

The basic lower bounds for the equivariant double-slice genus, $\tilde{g}_{ds}(K,\tau)\geq g_{ds}(K)$ and $\tilde{g}_{ds}(K,\tau)\geq  2\tilde{g}_4(K,\tau)$, do not allow us to differentiate the equivariant double-slice genus from the double-slice genus and the equivariant $4$-genus at the same time, as can be done with Theorem \ref{thm: Main}. 

\subsection{Involutions of handlebodies} \label{sec: handlebodies}

Here we provide a brief discussion of the involutions of handlebodies and, more specifically, equivariant slicing handlebodies. We start by recalling a general result about involutions on handlebodies coming from the work of Kalliongis and McCullough in \cite{kalliongis1996orientation}. In their work, Kalliongis and McCullough discuss multiple decompositions of actions on handlebodies into simpler actions on smaller parts. We will make use of the first decomposition they discuss, called the vertical-horizontal decomposition.
They define an involution on a bundle $\Sigma \times I$ to be \textit{vertical} if it is of the form $1_\Sigma \times r$ and \textit{horizontal} if it is of the form $\sigma \times I$ for $\sigma$ some involution of $\Sigma$, see Figure \ref{fig: decomp}. 

\begin{figure}[h]
\begin{tabular}{p{.48\textwidth}p{.48\textwidth}}
    \begin{center}\includegraphics[width=.9\linewidth]{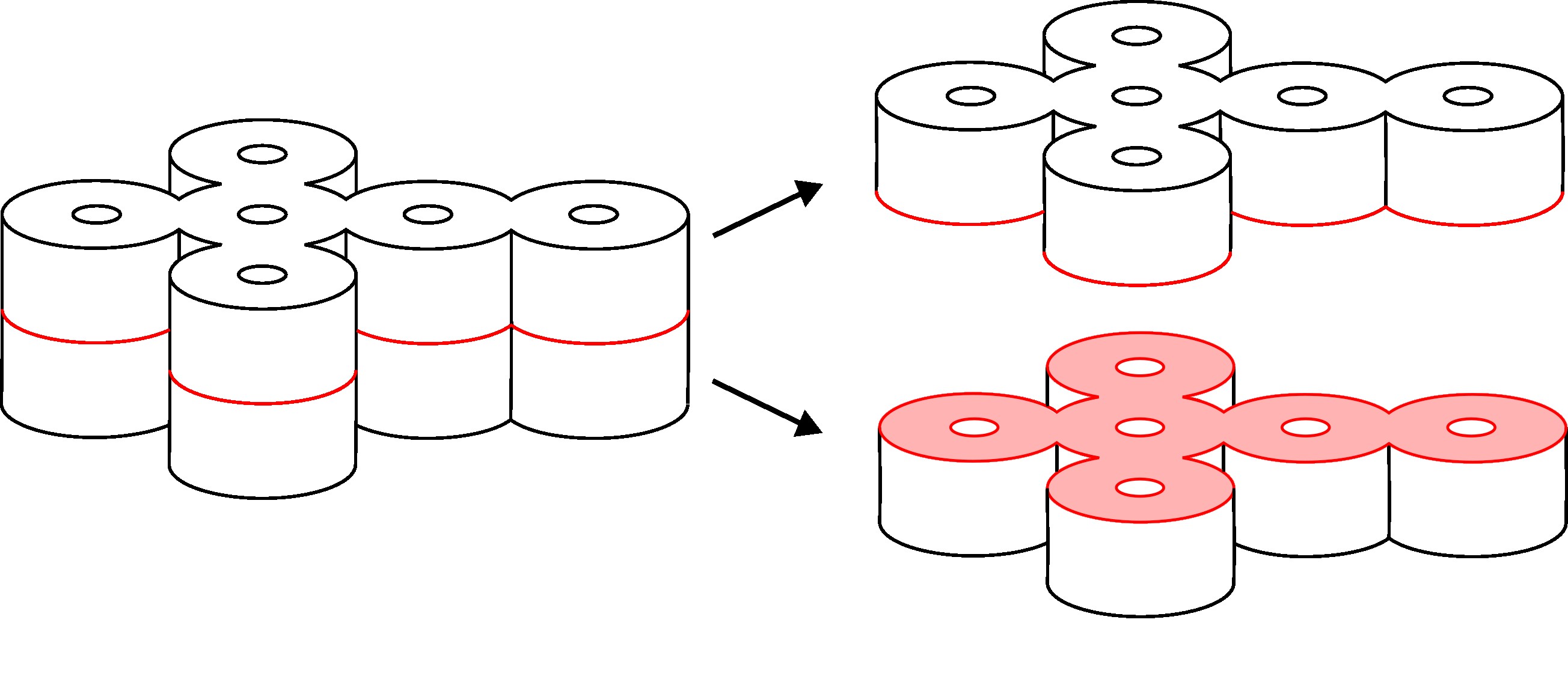}\end{center} &  \begin{center}\includegraphics[width=.9\linewidth]{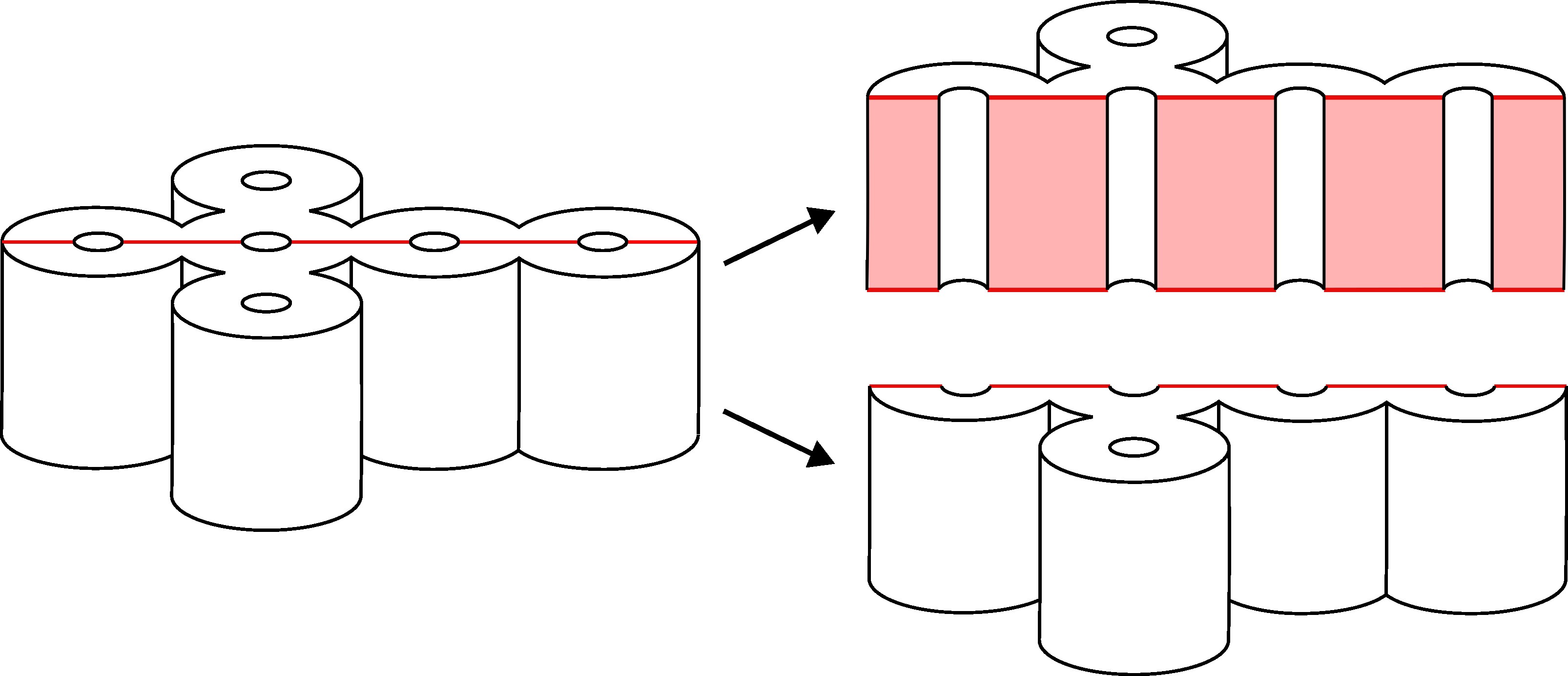} \end{center}
    \end{tabular}
\caption{Example of vertical $H_i$ components (left) and horizontal $H_0$ components (right) with fixed point set in red.}
\label{fig: decomp}
\end{figure}

\begin{theorem}[Theorem 5.1 in \cite{kalliongis1996orientation}] \label{thm: Kall} Let $\tau$ be an orientation reversing involution of a handlebody $H$, and suppose that some component of fix$(h)$ is 2-dimensional. Then $H$ has a decomposition $H = H_0 \cup (\bigcup_{j=1}^r H_j)$, where each piece is $h$-invariant, such that

\begin{enumerate}
    \item $H_0$ is an $I$-bundle over a connected 2-manifold, and the restriction of $h$ to $H_0$ is horizontal. This action may be chosen to be a product action if and only if no component of fix$(h)$ is a point or a M\"obius band. 
    \item Each $H_j$ is an $I$-bundle over a surface of negative Euler characteristic, which is a deformation retract of a component of fix($h)$, and the restriction of $h$ to $H_j$ is vertical.
    \item $\{H_1,\dots, H_r\}$ are pairwise disjoint, and for $1\leq i \leq r$ each $H_0\cap H_i$ is a single 2-disk.
\end{enumerate}
    
\end{theorem}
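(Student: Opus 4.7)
The plan is to construct the decomposition by isolating vertical pieces around the 2-dimensional fixed components of $\tau$ and showing that what remains carries a compatible horizontal $I$-bundle structure. The initial observation is that since $\tau$ is orientation-reversing on $H$ and fixes each 2-dimensional component $F_j$ of fix$(\tau)$ pointwise, the action of $\tau$ on the normal bundle of $F_j$ is multiplication by $-1$. Consequently, a $\tau$-invariant regular neighborhood of $F_j$ has the structure of an $I$-bundle over $F_j$ on which $\tau$ restricts to the vertical involution. These neighborhoods furnish the candidate pieces $H_j$, one for each component of fix$(\tau)$ of negative Euler characteristic, while components of fix$(\tau)$ that are disks, spheres, annuli, M\"obius bands, tori, or Klein bottles, together with any isolated fixed points or fixed arcs, are absorbed into $H_0$ where they can be accommodated by a horizontal action.

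For the complementary piece $H_0 := \overline{H \setminus \bigcup_j H_j}$, I need to exhibit an $I$-bundle structure over a connected 2-manifold with respect to which $\tau|_{H_0}$ is horizontal. The approach is to invoke an equivariant Loop Theorem and Dehn's Lemma (in the vein of Meeks-Yau) to produce a $\tau$-invariant complete system of meridian disks for $H_0$, chosen transverse to fix$(\tau)$. Cutting $H_0$ along this system produces a collection of equivariant 3-balls on each of which $\tau$ acts freely or with at most 1-dimensional fixed set; each such ball is classified up to equivariant homeomorphism as $D^2 \times I$ with $\tau = \sigma \times 1_I$ for an involution $\sigma$ of $D^2$. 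These local product structures glue back along the cutting disks to yield a global $I$-bundle structure on $H_0$ with horizontal $\tau$-action, and the 2-manifold base is connected once one checks that any separating $\tau$-invariant sphere in $H_0$ can be capped off equivariantly inside $H$.

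The main technical obstacle is arranging that each $H_0 \cap H_j$ is a single 2-disk, as condition (3) demands, rather than the much larger intersection surface produced by a naive regular neighborhood. To achieve this, I would not take the full bundle neighborhood of $F_j$ as $H_j$, but rather a smaller sub-handlebody obtained by selecting a single attaching disk on the boundary of a tubular neighborhood and extending outward along one trajectory of the vertical $I$-bundle, with the excess absorbed into $H_0$ where it contributes to the horizontal action. This modification is realizable equivariantly precisely because $F_j$ has negative Euler characteristic and hence admits a spine along which the bundle can be thinned while preserving the deformation retract requirement of (2). Finally, criterion (1), distinguishing product from twisted horizontal actions, follows from the classification of orientation-reversing involutions on connected 2-manifolds: the horizontal action is conjugate to a product action exactly when the fixed point set on the base contains neither isolated points nor M\"obius band components, matching the exclusion in the theorem statement.
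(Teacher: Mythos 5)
The paper does not prove this statement: it is Theorem 5.1 of Kalliongis--McCullough, quoted verbatim and used as a black box, so there is no internal proof to compare against. Your sketch is therefore an attempt to reprove their result, and while the overall architecture (vertical $I$-bundle neighborhoods of the $2$-dimensional fixed components, with the complement carrying a horizontal structure) matches the shape of their argument, it has genuine gaps. The most concrete error is in the cut-and-paste step for $H_0$: after cutting along an invariant meridian system, an invariant ball does not admit a free involution or one with $1$-dimensional fixed set. By Smith theory every involution of $B^3$ has nonempty fixed set, and the orientation-reversing ones are conjugate to linear actions whose fixed sets are either a $2$-disk (a reflection) or a single point (the cone on the antipodal map of $S^2$); the latter is exactly the case responsible for the ``point component'' exclusion in (1), and your classification omits the case you actually need (the fixed disk) while allowing cases that cannot occur. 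More importantly, the assertion that the local product structures on the cut-open pieces ``glue back along the cutting disks to yield a global $I$-bundle structure on $H_0$ with horizontal action'' is precisely the technical heart of the Kalliongis--McCullough proof: compatibility of the $I$-fibrations across the cutting disks is not automatic and requires an equivariant isotopy argument, so stating it as a one-line consequence is not a proof.

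The treatment of condition (3) is also not viable as written. The frontier of an invariant $I$-bundle neighborhood of a fixed surface $F_j$ (properly embedded, with $\partial F_j \subset \partial H$) is its horizontal boundary, a surface of the same complexity as $F_j$ rather than a disk; ``selecting a single attaching disk and extending outward along one trajectory of the vertical $I$-bundle'' does not describe a well-defined modification, and the negative Euler characteristic of $F_j$ plays no role in the thinning you propose. Arranging that $H_0 \cap H_j$ is a single $2$-disk requires rechoosing $H_j$ as an $I$-bundle over a surface that merely deformation retracts onto the fixed component (note the theorem asserts a deformation retract, not equality) and then adjusting the decomposition so the pieces meet along one disk; this is a separate argument in the source. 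Since the theorem is imported wholesale in the paper, the honest options are to cite it, as the paper does, or to reproduce the Kalliongis--McCullough proof in full; the present sketch does neither.
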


In the case of an equivariant slicing handlebody, we get added restrictions on the vertical-horizontal decomposition coming from our knowledge of $\overline{\tau}$. 

\begin{lemma} \label{prop: fix}The fixed point set of an equivariant slicing handlebody for a directed strongly invertible knot $(K,\tau,h)$ is the disjoint union of some number of planar surfaces.
\end{lemma}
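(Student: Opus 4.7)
The plan is to reduce the lemma to the classical fact that any compact orientable surface embedded in $S^2$ is planar. First I would invoke Smith's theorem \cite{smith1941fixed} (already recalled at the start of this section) to identify the fixed set of the orientation preserving extension $\overline{\tau}$ on $S^4$ as a smoothly embedded 2-sphere $F\subset S^4$. Since $\overline{\tau}(H)=H$, the fixed set of the restriction $\overline{\tau}|_H$ is exactly $H\cap F$, and it is nonempty because it contains the half-axis $h$.

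Next I would confirm that $H\cap F$ is a smoothly embedded 2-submanifold of $F$. By Remark \ref{rem: orientation}, $\overline{\tau}|_H$ is orientation reversing on the 3-manifold $H$, so at any fixed point $p$ the linearization on $T_pH$ has an odd-dimensional $-1$-eigenspace; hence the $+1$-eigenspace (which is tangent to the local fixed set) has dimension $2$ or $0$. An isolated, $0$-dimensional fixed point would force $F$ to be tangent to $\partial H$ at that point, a non-generic configuration which can be eliminated by a small equivariant isotopy of $H$ near its boundary. After this clean-up, each component of $H\cap F$ is a compact smooth 2-submanifold of $F\cong S^2$, with boundary lying in $\partial H\cap F$.

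Finally, I would note that any compact orientable subsurface $P$ of $S^2$ must have genus zero: if $P$ contained a non-separating simple closed curve $\gamma$, then by the Jordan curve theorem $\gamma$ would split $S^2$ into two open disks, yet $P\setminus\gamma$ would be simultaneously connected (since $\gamma$ is non-separating in $P$) and forced to meet both disks via the two-sided collar of $\gamma$ in the orientable surface $P$, a contradiction. Therefore every component of $H\cap F$ is planar, establishing the lemma.

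The main obstacle I foresee is the second step, namely justifying that the fixed set is a genuine 2-submanifold free of isolated points or tangential contact with $\partial H$. I expect this to be dispatched by a standard equivariant general-position argument using an equivariant tubular neighborhood of $F$ in $S^4$, but it is the one place where a smoothness or genericity hypothesis on the extension $\overline{\tau}$ must be invoked carefully.
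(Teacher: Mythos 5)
Your proposal is correct and follows essentially the same route as the paper: both arguments rest on (i) the fixed set of $\overline{\tau}|_H$ being a $2$-manifold and (ii) its containment in the Smith fixed $2$-sphere of $\overline{\tau}$ in $S^4$, which forces planarity. The only real difference is in how (i) is justified: the paper cites the structure theory of orientation-reversing involutions on handlebodies from Kalliongis--McCullough, whereas you argue directly from the linearization at a fixed point. Your version is more self-contained, and it can be tightened at the one place you flag as an obstacle. There is no need for a genericity or perturbation argument to rule out isolated fixed points: at any fixed point $p\in H$ the differential $d\overline{\tau}_p$ preserves the $3$-plane $T_pH$ inside $T_pS^4$, its $+1$-eigenspace is the $2$-plane $T_pF$, and the invariant subspace $T_pH$ splits as the sum of its intersections with the eigenspaces, so the fixed directions in $T_pH$ form $T_pH\cap T_pF$, which has dimension at least $3+2-4=1$. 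Combined with your parity observation (the $-1$-eigenspace on $T_pH$ is odd-dimensional, so the $+1$-eigenspace is even-dimensional), this forces the fixed set to be exactly $2$-dimensional everywhere, with no isolated points. This matters because your proposed fix --- removing isolated points by a small equivariant isotopy of $H$ --- would not actually work: an equivariant ambient isotopy carries $H\cap F$ homeomorphically onto the fixed set of the perturbed handlebody, so it cannot destroy a genuine isolated fixed point. Fortunately none exist, and with that substitution your argument is complete.
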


\begin{proof} Let $H$ be an equivariant slicing handle body for $(K,\tau,h)$. By Remark \ref{rem: orientation}, $\tau$ must be orientation reversing on $H$ with 2-dimensional fixed point set. So, by \cite{kalliongis1996orientation}, the fixed point set of $\tau|_H$ is a collection of surfaces. However, the fixed point set of $\tau|_H$ is a subset of the total fixed point set, a sphere. This means the surfaces comprising the fixed point set are limited to planar surfaces. 

% We now show that we can't have isolated points in our fixed point set. In equivariant coordinates in a neighborhood of a fixed point, $\tau$ negates $x_1,x_2$, while for isolated fixed points of inversions on handlebodies, the action would need to negate 3 components. Thus we cannot have isolated points in the fixed point set of $\tau|_h$. Therefore we know that our fixed point set is a disjoint union of planar surfaces.
\end{proof}

With this restriction on the fixed point set, we can now rewrite Theorem \ref{thm: Kall} for equivariant slicing handlebodies to give us better restrictions on the $H_i$s.

\begin{proposition} \label{prop: decomp} Let $H$ be an equivariant slicing handlebody for a directed strongly invertible knot $(K,\tau,h)$. Then $H$ has a decomposition $H = H_0 \cup (\bigcup_{j=1}^r H_j)$, where each piece is $\tau$-invariant, such that

\begin{enumerate}
    \item $H_0$ is an $I$-bundle over a connected planar surface, and the restriction of $\tau$ to $H_0$ is horizontal and a product action. 
    \item Each $H_j$ is an $I$-bundle over a planar surface which is not a disk or annulus, which is a deformation retract of a component of fix($\tau)$, and the restriction of $\tau$ to $H_j$ is vertical. 
    \item $\{H_1,\dots, H_r\}$ are pairwise disjoint, and for $1\leq i \leq r$ each $H_0\cap H_i$ is a single 2-disk.
\end{enumerate}
    
\end{proposition}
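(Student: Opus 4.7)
The plan is to apply Theorem \ref{thm: Kall} to $H$ and then refine each of its conclusions using the additional constraints supplied by Lemma \ref{prop: fix}. First, I verify the hypotheses: Remark \ref{rem: orientation} gives that $\tau|_H$ is orientation-reversing, and because $H$ meets $S^3$ in a surface containing the half-axis $h \subset$ fix$(\overline{\tau})$, the set $H \cap$ fix$(\overline{\tau})$ is nonempty. By Lemma \ref{prop: fix} it is a disjoint union of 2-dimensional planar surfaces, so in particular there is a 2-dimensional fix component. Theorem \ref{thm: Kall} then produces a decomposition $H = H_0 \cup \bigcup_{j=1}^r H_j$ satisfying its unrefined conclusions.

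The horizontal action on $H_0$ can be chosen to be a product action: Theorem \ref{thm: Kall}(1) permits this exactly when no component of fix$(\tau|_H)$ is a point or a M\"obius band, and by Lemma \ref{prop: fix} every such component is a 2-dimensional planar surface, hence neither 0-dimensional nor nonorientable.

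For each $H_j$, Theorem \ref{thm: Kall}(2) says the base $P_j$ deformation retracts onto a component of fix$(\tau|_H)$, which is planar by Lemma \ref{prop: fix}, so $P_j$ is planar. Combined with $\chi(P_j) < 0$ from Theorem \ref{thm: Kall}(2) and the formula $\chi(P_j) = 2 - b$ for a connected planar surface with $b$ boundary components, we obtain $b \geq 3$, so $P_j$ is neither a disk nor an annulus, as required.

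The delicate remaining point, and the main technical obstacle, is to show that the base $\Sigma_0$ of $H_0$ is also planar, since Theorem \ref{thm: Kall} a priori allows positive genus. My plan here is to exploit the $\overline{\tau}$-equivariant embedding of $H$ in $S^4$: the fixed sphere $S^2 =$ fix$(\overline{\tau})$ meets $H_0$ in fix$(\sigma) \times I$, a disjoint union of disks and annuli lying inside the planar surface $F := H \cap S^2$. Since $\tau|_{H_0} = \sigma \times 1_I$ is orientation-reversing, $\sigma$ is an orientation-reversing involution of $\Sigma_0$, and $\Sigma_0$ is a branched double cover of $\Sigma_0/\sigma$ along the image of fix$(\sigma)$. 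The idea is then to use that the branch locus sits inside the planar $F \subset S^2$, together with the ambient $\overline{\tau}$-equivariance, to show that any essential handle in $\Sigma_0$ would force a nonplanar component of $F$ or violate the product structure; if necessary, one can further cut $H_0$ along an equivariant collection of vertical disks $\gamma \times I$ for $\sigma$-invariant essential curves $\gamma \subset \Sigma_0$ to reduce to the planar case. Carrying this reduction out carefully and verifying it is compatible with conclusions (1)--(3) is where the bulk of the proof's work lies.
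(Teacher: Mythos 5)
Your overall route is the same as the paper's: check the hypotheses of Theorem~\ref{thm: Kall} using Remark~\ref{rem: orientation} and the presence of the half-axis, then upgrade each conclusion using Lemma~\ref{prop: fix}. Your handling of the product-action criterion in (1) and of item (2) --- the base of each $H_j$ is planar because it deformation retracts onto a planar component of the fixed set, and $\chi<0$ for a connected planar surface forces at least three boundary components, excluding the disk and annulus --- coincides with the paper's argument, with your Euler characteristic count being slightly more explicit. Item (3) requires no change in either account.

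The gap is the one you yourself flagged: the planarity of the base $\Sigma_0$ of $H_0$ is never actually established. What you offer there is a plan (``the idea is then to use\dots'', ``if necessary, one can further cut\dots''), not an argument, and the plan's central step --- that an essential handle in $\Sigma_0$ would force a nonplanar component of $F=H\cap \mathrm{fix}(\overline{\tau})$ --- is precisely what needs proof and is not true at the level of the abstract involution: for a horizontal product action $\sigma\times 1_I$ the fixed set is $\mathrm{fix}(\sigma)\times I$, a union of disks and annuli, which is planar regardless of the genus of $\Sigma_0$. (Take $\Sigma_0$ a once-punctured torus with a reflection; the fixed set of $\sigma\times 1_I$ is a disk and an annulus.) So planarity of the fixed set alone does not constrain $\Sigma_0$, and any proof must extract additional leverage from the equivariant embedding of $H$ in $(S^4,\overline{\tau})$, which your sketch does not do. For comparison, the paper disposes of this point in one sentence, asserting that planarity of $\Sigma_0$ is ``a direct result of Lemma~\ref{prop: fix}''; your instinct that this is the delicate point is sound, but as written your proposal does not close it, and merely citing the lemma, as the paper does, would not close it either.
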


\begin{proof}
    We begin by verifying that the assumptions of Theorem \ref{thm: Kall} hold for an arbitrary equivariant slicing handlebody $H$, i.e. that $\tau|_H$ is orientation reversing and that fix$(\tau|_H)$ contains a 2-dimensional component. The fact that $\tau|_H$ is orientation reversing was discussed in Remark \ref{rem: orientation}. The fact that fix$(\tau|_H)$ contains a 2-dimensional component comes from the fact that it contains a half-axis of fix$(\tau)\subset S^3$. Since this subset is 1-dimensional, and since $\tau|_H$ is orientation reversing, it must then be a subset of a 2-dimensional subset of fix$(\tau|_H)$. Thus, we know that $H$ has a vertical horizontal decomposition as in Theorem \ref{thm: Kall}.

    We now look at the changes to $(1)$, namely that $H_0$ is an $I$-bundle over a planar surface and that the restriction of $h$ to $H_0$ is horizontal and a product action. The fact that $H_0$ is an $I$-bundle over a planar surface, as opposed to an arbitrary 2-manifold as in Theorem \ref{thm: Kall}, is a direct result of Lemma~\ref{prop: fix}. Similarly, by Lemma~\ref{prop: fix} no component of $fix(\tau)$ is a point or Mobius band. So by (1) of Theorem \ref{thm: Kall}, we have that the restriction of $\tau$ to $H_0$ is horizontal and is a product action. 

    For (2) the only change made from Theorem~\ref{thm: Kall} is noting that the only surfaces that can appear are, by Lemma~\ref{prop: fix}, planar surfaces. By (2) of Theorem \ref{thm: Kall}, each must have a negative Euler characteristic, hence is not a disk or annulus. 

    As there were no changes to (3), this completes the proof.
    
\end{proof}

If instead of orientation preserving extensions we consider orientation reversing extensions, the problem changes dramatically. In this case, the action switches the hemispheres of $S^4$ and would be orientation preserving on the equivariant slicing handlebody for $(K,\tau)$. The main results of this paper, discussed in the next section, do not apply in this setting, as the proofs rely heavily on the behavior of the action on the handlebodies we have just discussed.

\section{Bounds on equivariant double-slice genus} \label{sec: Bounds}

In this section, we prove Theorem \ref{thm: Main} and Theorem \ref{thm: Construction} and discuss relevant examples. We first prove a version of Theorem~\ref{thm: Main} for directed strongly invertible knots, from which Theorem~\ref{thm: Main} follows immediately.

\begin{theorem} \label{thm: ds bound} Let $(K,\tau,h)$ be a directed strongly invertible knot and $K_0$ be the union of $h$ with an arc of $K$ ending on the two fixed points. Then $g_{ds}(K_0)\leq \tilde{g}_{ds}(K,\tau,h)$. 
\end{theorem}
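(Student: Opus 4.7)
The plan is to build from an equivariant slicing handlebody $H$ of genus $g$ for $(K,\tau,h)$ an unknotted surface in $S^4$ of genus at most $g$ with cross-section $K_0$. The key idea is that the two-dimensional fixed surface $F\subset H$ of $\overline{\tau}|_H$ separates $H$ into two halves $H_+$ and $H_- = \overline{\tau}(H_+)$, and $H_+$ itself will be the desired handlebody, with $\Sigma_0 = \partial H_+$ as the sought surface.

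To construct $H_+$, I would apply Proposition \ref{prop: decomp} to write $H = H_0\cup\bigcup_{j=1}^r H_j$. For each vertical piece $H_j = P_j\times I$ with action $1\times r$, the local fixed surface $F_j = P_j\times\{1/2\}$ separates $H_j$ into the two handlebodies $H_j^+ = P_j\times[0,1/2]$ and $H_j^- = P_j\times[1/2,1]$, each of the same genus as $H_j$. For the horizontal piece $H_0 = P_0\times I$ with action $\sigma\times 1$, the local fixed surface $F_0 = \mathrm{fix}(\sigma)\times I$ separates $H_0$ into $H_0^\pm = P_0^\pm\times I$, where $P_0^\pm$ are the two planar halves of $P_0$ after cutting along $\mathrm{fix}(\sigma)$. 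These local separations agree on each gluing disk $H_0\cap H_j$, which $F$ cuts into two half-disks, so they assemble into a global decomposition $H = H_+\cup_F H_-$ with $H_+ = H_0^+\cup\bigcup_j H_j^+$ a handlebody (a union of handlebodies glued along disks).

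To bound the genus of $H_+$, I would observe that $H_j^+$ has the same genus as $H_j$ for each $j\geq 1$, while an Euler characteristic computation using $\chi(P_0^+) = (\chi(P_0)+\chi(\mathrm{fix}(\sigma)))/2$ together with $\chi(\mathrm{fix}(\sigma))\geq 0$ shows that $H_0^+$ has genus at most that of $H_0$. Since disk-gluing is additive on genus, summing over pieces gives that $H_+$ has genus at most $g$. Finally, $\partial H_+ = \Sigma_+\cup F$, with $\Sigma_+ := \partial H\cap H_+$, meets $S^3$ in $\alpha\cup h = K_0$, where $\alpha = K\cap H_+$ is the arc of $K$ lying on the same side of $F$ as $h$. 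Thus $\partial H_+\subset S^4$ is an unknotted surface of genus at most $g$ with cross-section $K_0$, yielding $g_{ds}(K_0)\leq g = \tilde{g}_{ds}(K,\tau,h)$. The main obstacle I anticipate is establishing this global separation cleanly from the local product structures given by Proposition \ref{prop: decomp}, in particular verifying the compatibility of the $F$-decomposition at each gluing disk $H_0\cap H_j$ and confirming that $H_+$ is indeed a handlebody of the claimed genus in $S^4$.
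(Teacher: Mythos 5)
Your proposal follows essentially the same route as the paper's proof: apply Proposition \ref{prop: decomp}, observe that the fixed surface separates each vertical piece $H_j$ into two copies of equal genus and the horizontal piece $H_0$ into two halves of no larger genus, check compatibility along the gluing disks, and conclude that each half is a slicing handlebody for $K_0$ of genus at most $g(H)$. The one point you flag as an anticipated obstacle --- why $\mathrm{fix}(\sigma)$ genuinely separates the planar base of $H_0$ into two interchanged halves --- is handled in the paper by a short Smith-theory argument (an involution of a circle has zero or two fixed points, so the fixed set is a separating union of circles and properly embedded arcs), but otherwise your argument matches the paper's.
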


\begin{proof} Let $H$ be a minimal equivariant slicing handlebody for $(K,\tau,h)$. We will start by creating a decomposition of $H$ which allows us to construct a useful slicing handlebody for $K_0$.

We start by decomposing $H$ into $H=H_0 \cup (\bigcup_{j=1}^r H_j)$ as described in Proposition \ref{prop: decomp}. Using this decomposition, we will show that the fixed point set $\overline{\tau}|_H$, which we will abuse notation and refer to as simply $\tau$, separates $H$ into two identical components, $H^1$ and $H^2=\tau(H^1)$. Smith proved that the fixed point set of an involution on a sphere
is a $\mathbb{Z}_2-\check{C}ech$ homology sphere \cite{smith1945transformations,smith1938transformations,smith1939transformations,smith1941transformations,}. This means an involution on a circle has zero or two fixed points. It follows by extending the involution on a planar surface $\Sigma$ to an involution on $S^2$
that a 1-dimensional fixed point set of an involution on $\Sigma$ is either $S^1$ or a collection of properly embedded arcs, either of which separates the surface into identical components.

Thus, since $\tau$ acts trivially on fibers of $H_0$, the fixed point set of $\tau$ separates $H_0$ into two identical components, $H_0^1$ and $H_0^2$, satisfying $g(H_0)\geq g(H_0^i)$ as shown in Figure \ref{fig: decomp}. For the $H_j$, the fixed point set is a planar surface separating the$H_j$ into identical components, $H_j^1$ and $H_j^2$, with $H_j^1=\tau(H_j^2)$ and $g(H_j)=g(H_j^i)$ for $i=1,2$ as shown in Figure \ref{fig: decomp}. Note that all the attaching regions in both $H_0$ and the $H_j$ are disks containing an arc of the fixed point set which are identified. Since both fixed point sets separate, the fixed point set of $H_0\cup H_j$ separates. Iterating, we get that the fixed point set of $H$ separates it into two handle bodies, $H^1$ and $H^2$, with decompositions given by connect sums of the $H_j^1$ and $H_j^2$, respectively. Thus, we have:

\[
g(H)=g(H_0)+\sum_{j=1}^r g(H_j) \geq g(H_0^i) +\sum_{j=1}^r g(H_j^i) = g(H^i).
\]

By construction, the $H^i$ are splitting handlebodies for the two ``halves" of $K$ corresponding to the two different arcs. 
\end{proof}

This allows us to partially reduce questions about the equivariant double-slice genus of a knot $K$ to the non-equivariant double-slice genus of a different knot $K_0$. With this, we are ready to prove Theorem \ref{thm: Construction}.

\begin{figure}
    \begin{tabular}{p{.48\textwidth}p{.48\textwidth}}
        \begin{center}\includegraphics[width=.9\linewidth]{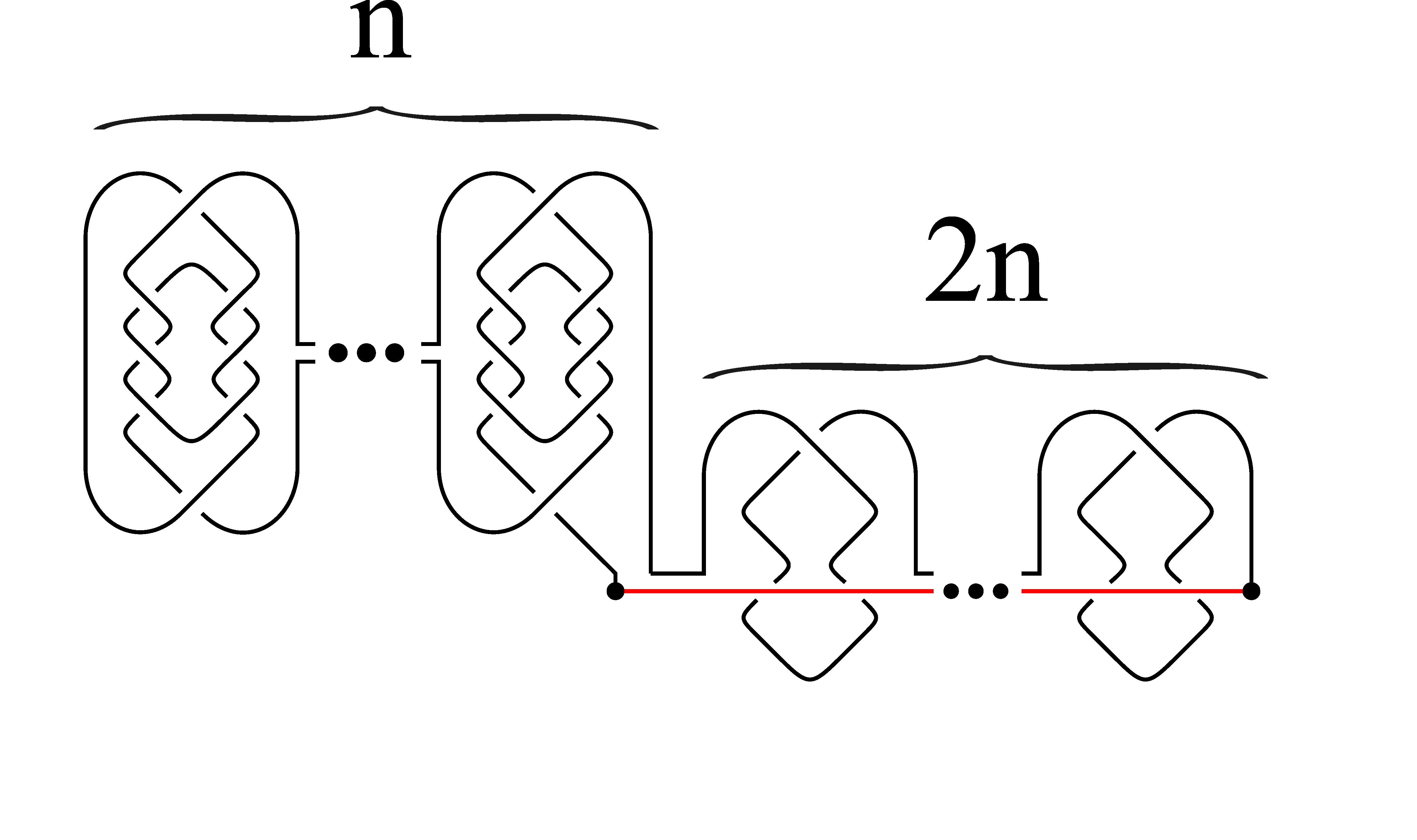}\end{center} &  \begin{center}\includegraphics[width=.9\linewidth]{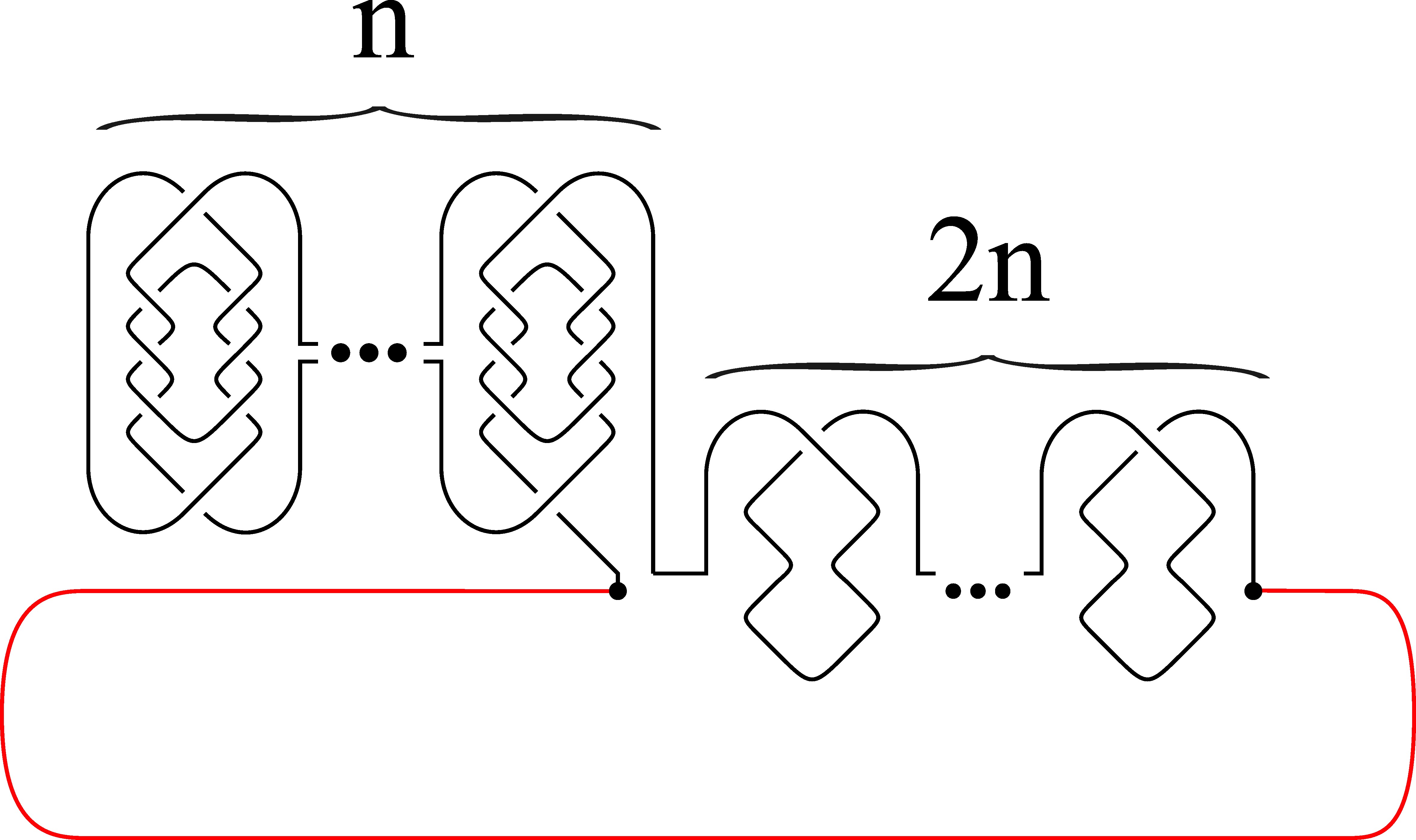} \end{center}
    \end{tabular}
    \caption{The decomposition of $(K_n,\tau)$ with both half-axis.}
    \label{fig: Main Decomp}
    \end{figure}

\begin{proof}[Proof of Theorem \ref{thm: Construction}]
    We will show the knot $K_n$ pictured in Figure \ref{fig:Main Construction} satisfies the desired properties. To see that $K_n$ is double-slice, we note that $K_n = (8_{20}\# -8_{20})^{2n}$. Since $8_{20}\# -8_{20}$ is double-slice, so is the $2n$ connect sum of it with itself, i.e. $K_n$. Now we show that it is equivariantly slice. First, note that $8_{20}$ is the pretzel knot $(3,-3,2)$ and so, by Sakuma \cite{sakuma1986strongly}, is equivariantly slice. Additionally note that since $8_{20}$ is slice, $D(8_{20})$ is equivariantly slice. Since $K_n$ can be seen as an equivariant connect sum of 2$n$ copies of $8_{20}$ and $n$ copies of $D(8_{20})$, which we just said are equivariantly slice, we know that $K_n$ is also equivariantly slice. 

    Lastly, we show that $\tilde{g}_{ds}(K_n,\tau,h_n)\geq n$. To do this, we first see in Figure \ref{fig: Main Decomp} that, regardless of which half-axis we pick, the knot we get from taking an arc of $K_n$ union the half-axis is $\#^n 8_{20}$. Thus, by Theorem \ref{thm: Main}, we have that, for either choice of half-axis, $ \tilde{g}_{ds}(K_n,\tau,h_n) \geq g_{ds}(\#^n 8_{20})$. As discussed in Example \ref{example: 8_20}, using signature invariants and the work of Orson-Powell \cite{orson2020lower}, we can see $g_{ds}(\#^n 8_{20})=n$. Thus, we get our last property, that $\tilde{g}_{ds}(K_n,\tau)\geq n$.
   
\end{proof}

\begin{figure}[h]
    \centering
    \includegraphics[width = .35\linewidth]{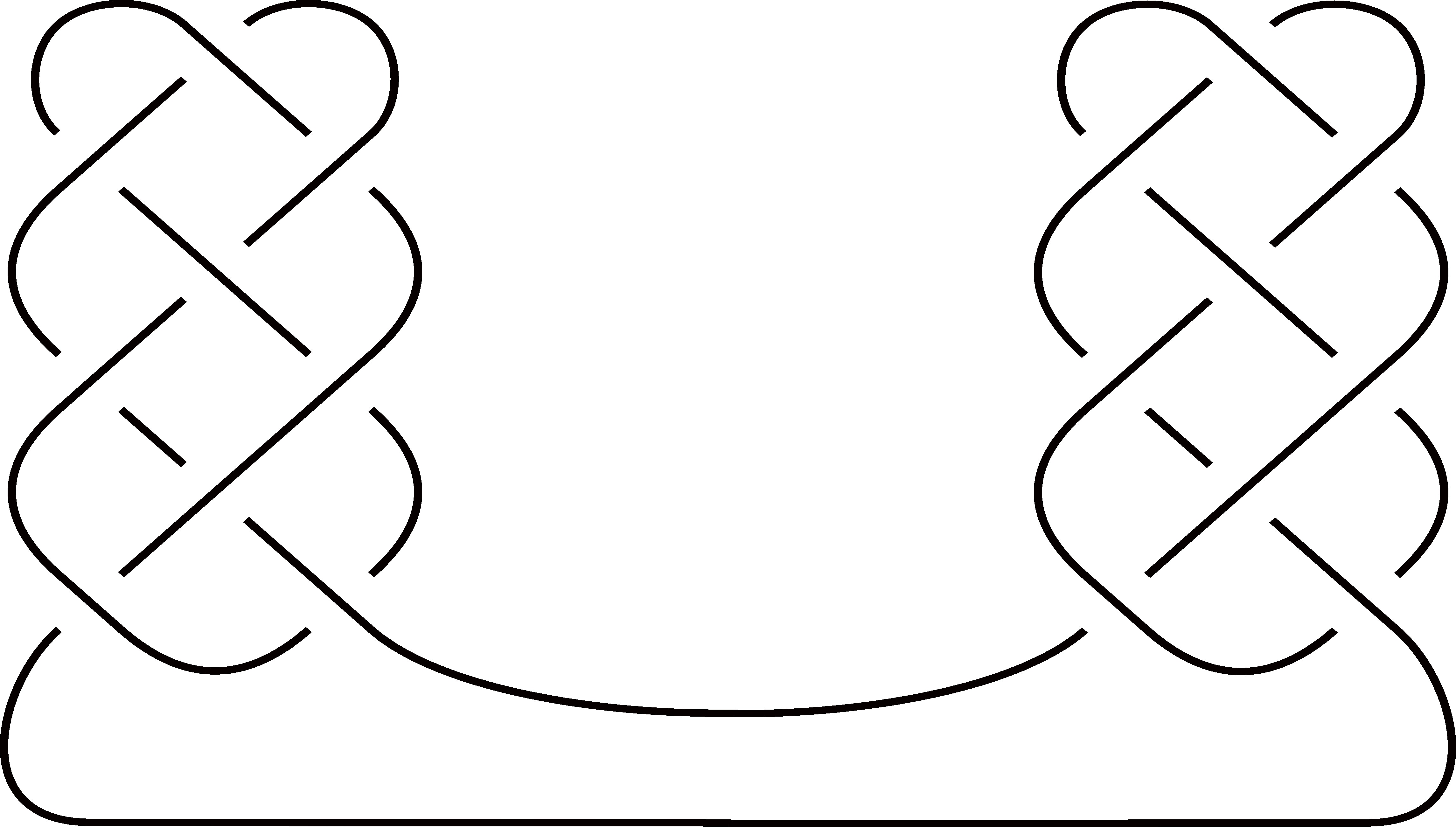}
    \caption{The knot $8_9\#8_9$.}
    \label{fig:8_9 Sum}
\end{figure}

\begin{example}
    For a slightly different construction, we will show that the hyperbolic knot $(S,\tau,h)$ depicted in Figure \ref{fig:8_9 Sum Hyperbolic} is double-slice, equivariantly slice, but not equivariantly double-slice. We will do this by showing the knot $(K,\tau,h)$ depicted in Figure \ref{fig:8_9 Sum} is double-slice and equivariantly slice then identifying an invertible concordance from $(S,\tau,h)$ to $(K,\tau,h)$. Because the constructed concordance will be symmetric, this will show that $(S,\tau,h)$ is also double-slice and equivariantly slice. We will then apply Theorem \ref{thm: Main} to see that it is not equivariantly double-slice.
    
    Since $K$ is the connect sum of the knot $8_{20}$ with itself, and $8_{20}$ is fully amphechiral (meaning $8_{20}=-8_{20}$), we have that $K=8_{20}\#8_{20}$ and is therefore double-slice. Additionally, $8_{20}$is slice,and therefore by Proposition \ref{prop: double}, $K=D(8_{20})$ is equivariantly slice.

    By adding symmetric grabbers to $8_9\#8_9$ to construct $S$ as in Figure \ref{fig:8_9 Sum Hyperbolic}, one gets a symmetric invertible concordance from $(S,\tau,h)$ to $(K,\tau,h)$. Therefore, $(S,\tau,h)$ is double-slice and equivariantly slice.

     Lastly, to see that $(S,\tau,h)$ is not equivariantly double-slice, we apply Theorem \ref{thm: ds bound}. The knot we get by adding a half axis and deleting an arc is $8_{20}$, which has $g_{ds}(8_{20})=1$. Therefore, $\tilde{g}_{ds}(S,\tau,h)\geq 1$, meaning $(S,\tau,h)$ is not equivariantly double-slice. 
\end{example}

\begin{figure}[h]
    \centering
    \includegraphics[width = .65\linewidth]{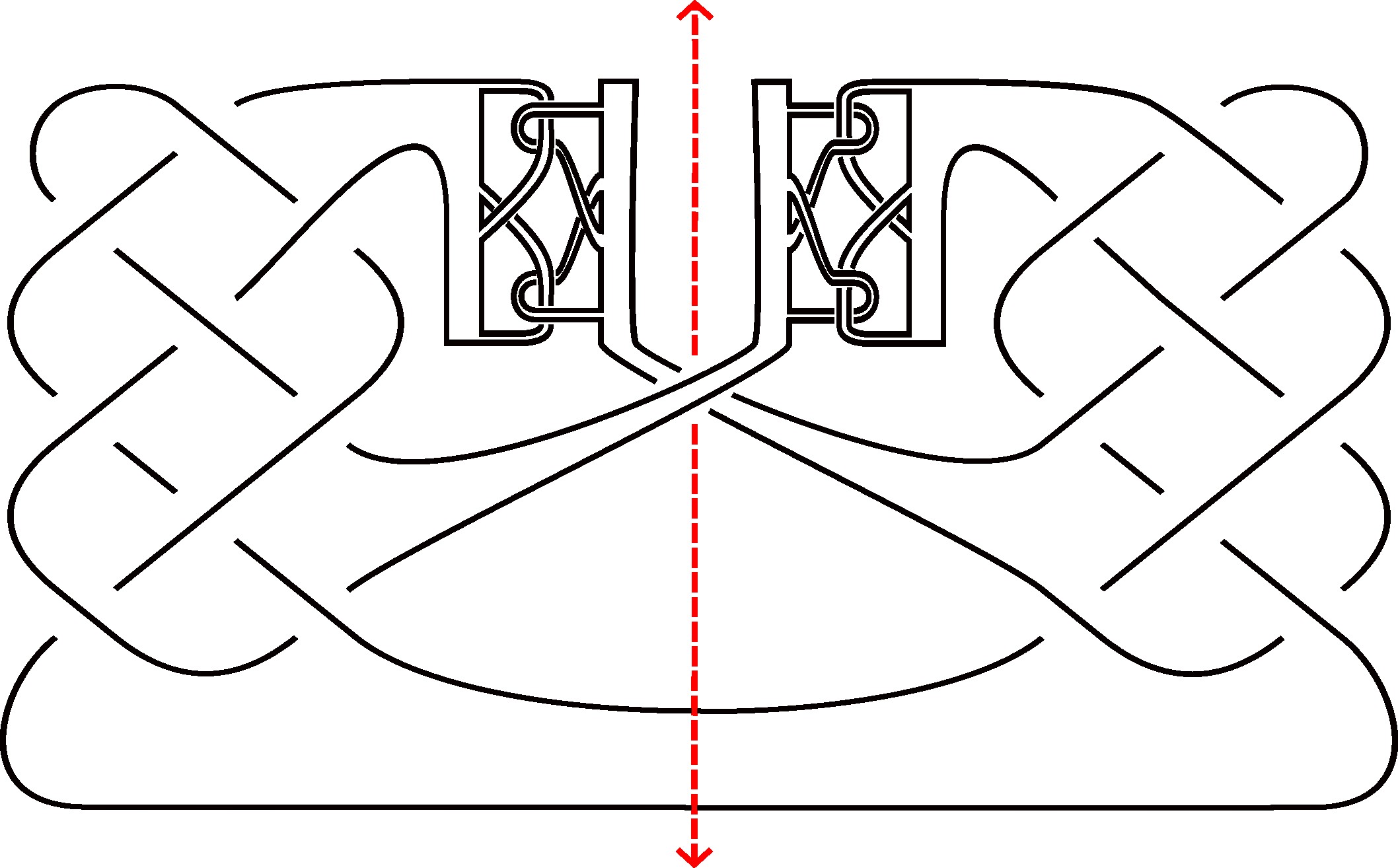}
    \caption{A knot $S$ invertibly concordant to $8_9\# 8_9$.}
    \label{fig:8_9 Sum Hyperbolic}
\end{figure}

\section{Equivariant super-slice genus and equivariantly knotted 2-spheres}\label{sec: Super}

In this section, we define a notion of equivariant super-slice genus and construct bounds similar to those for equivariant double-slice genus in Theorem \ref{thm: Main}. We will use this result in Section \ref{sec: isotopy} to provide examples of unknotted symmetric 2-spheres which are symmetrically knotted.

\subsection{Equivariant super-slice genus}

Similar to the double-slice genus, there is a concept of \textit{super-slice genus} first defined by Wenzhao Chen in \cite{chen2021lower}. The \textit{super-slice genus} of a knot $K$, denoted $g_{ss}(K)$, is the minimal genus of a slicing surface $\Sigma$ for $K$ such that $\Sigma$ is symmetric about $K$. That is to say, $\Sigma\subset S^4$ is the double of a surface $F\subset B^4$ along its boundary $K\subset S^3$. We call $\Sigma$ a \textit{superslicing surface} for $K$, and the handlebody it bounds a \textit{superslicing handlebody}. In the initial conception of super-slice genus by Wenzhao Chen in \cite{chen2021lower}, $g_{ss}(K)$ is the genus of $F$ as opposed to $\Sigma$, meaning it is exactly half of our conception. This change is made to keep the numbers consistent with those of the double-slice genus. If $g_{ss}(K)=0$, we say $K$ is \textit{super-slice}.

While the double-slice genus is a clear lower bound for the super-slice genus of a knot, in our work we will make use of a stronger lower bound proved by Chen in \cite{chen2021lower}:

\begin{theorem}[Chen]\label{thm: Chen}
    Given a knot $K$ let $\Sigma$ be the two-fold branched cover of $S^3$ along $K$. Let $n$ be the minimum number of generators for $H_1(\Sigma;\Z)$. Then $n\leq g_{ss}(K)$.
\end{theorem}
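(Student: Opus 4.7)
The plan is to pass to the double branched cover of $S^4$ over a super-slicing surface and extract the bound from Mayer--Vietoris combined with a handle-theoretic computation. Let $g = g_{ss}(K)$ and fix a super-slicing surface $\Sigma \subset S^4$ of genus $g$: an unknotted surface symmetric under reflection across the equatorial $S^3$ and meeting $S^3$ transversely in $K$. Write $\Sigma = F \cup_K F$, where $F \subset B^4$ is properly embedded of genus $h = g/2$ with $\partial F = K$.

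First, I would compute the double branched cover $\widetilde{S^4} := \Sigma_2(S^4,\Sigma)$. Because $\Sigma$ is unknotted (bounds a handlebody in $S^4$), a standard computation using its Heegaard-like splitting of $S^4$ gives $\widetilde{S^4} \cong \#^g (S^1 \times S^3)$, so $H_2(\widetilde{S^4}) = 0$ and $H_1(\widetilde{S^4};\Z) \cong \Z^g$. The reflection across the equatorial $S^3$ lifts to an involution on $\widetilde{S^4}$ whose fixed set is exactly $Y := \Sigma_2(S^3,K)$, the cover named in the statement. This produces the symmetric decomposition $\widetilde{S^4} = X \cup_Y X$, where $X := \Sigma_2(B^4,F)$.

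Next I would apply Mayer--Vietoris to this decomposition. The relevant portion is
\[
H_2(\widetilde{S^4}) \longrightarrow H_1(Y;\Z) \xrightarrow{\;\phi\;} H_1(X;\Z) \oplus H_1(X;\Z),
\]
where $\phi(c) = (i_* c, -i_* c)$ for $i\colon Y \hookrightarrow X$ the inclusion. Since $H_2(\widetilde{S^4})=0$, the map $\phi$ is injective; and its image lies in the antidiagonal $\{(a,-a)\} \cong H_1(X;\Z)$. Therefore $i_*\colon H_1(Y;\Z) \hookrightarrow H_1(X;\Z)$ is injective. Because any subgroup of a finitely generated abelian group generated by $N$ elements is itself generated by at most $N$ elements, the minimum number of generators of $H_1(Y;\Z)$ is bounded above by the minimum number of generators of $H_1(X;\Z)$.

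Finally, I would bound the generator count of $H_1(X;\Z)$ by $g$. Choose a Morse function on $B^4$ whose restriction to $F$ has a single index-$2$ critical point and exactly $g = 2h$ index-$1$ critical points (a standard handle structure on a genus-$h$ surface with one boundary component). Lifting through the branched double cover produces a Morse function on $X$ whose critical points are in controlled bijection with those of $F$, yielding a handle decomposition of $X$ with one $0$-handle, $g$ $1$-handles, and some $2$-handles. In particular $\pi_1(X)$, and thus $H_1(X;\Z)$, is generated by $g$ elements, completing the chain of inequalities $n \leq \min\text{-gens}\, H_1(X;\Z) \leq g = g_{ss}(K)$. The main obstacle is this last step: verifying carefully how a Morse function on $F$ lifts under branched covering, so that surface $1$-handles correspond precisely to $4$-dimensional $1$-handles in $X$ (rather than $2$-handles or more complicated contributions). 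This hinges on a local model of the branched cover near each saddle of $f|_F$.
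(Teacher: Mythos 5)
The paper does not prove this statement; it is quoted from Chen \cite{chen2021lower}, so there is no in-paper argument to compare against. Judged on its own terms, your proposal has a fatal error at its foundation. The double branched cover of $S^4$ over the standard unknotted genus-$g$ surface is \emph{not} $\#^g(S^1\times S^3)$; it is $\#^g(S^2\times S^2)$. A quick sanity check: $\chi\bigl(\Sigma_2(S^4,\Sigma)\bigr)=2\chi(S^4)-\chi(\Sigma)=2+2g$, whereas $\chi\bigl(\#^g(S^1\times S^3)\bigr)=2-2g$. (For $g=1$ the cover is $S^2\times S^2$; in general each trivial tube contributes a hyperbolic summand to the intersection form, as one sees from the pushed-in genus-$g$ Seifert surface of the unknot, whose symmetrized Seifert form is a sum of hyperbolics.) Consequently $H_2(\widetilde{S^4})\cong\Z^{2g}\neq 0$, and your Mayer--Vietoris argument collapses: the kernel of $\phi\colon H_1(Y)\to H_1(X)\oplus H_1(X)$ is the image of $H_2(\widetilde{S^4})$, which can be large, so you cannot conclude that $i_*\colon H_1(Y)\to H_1(X)$ is injective. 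The crude count you would get from the corrected cover is $n\leq 2g+(\text{gens of }H_1(X)\oplus H_1(X))$, which is far from the claimed bound without substantial additional input.

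The final step is also wrong, and you correctly identified it as the weak point. In the standard local model for double branched covers (the Akbulut--Kirby algorithm), an index-$1$ critical point (saddle/band) of $F$ lifts to a $2$-handle of $X=\Sigma_2(B^4,F)$, not a $1$-handle; the $1$-handles of $X$ correspond to index-$0$ critical points (local minima/births) of $F$, whose number is an isotopy-dependent quantity not controlled by the genus. So the desired handle decomposition of $X$ with exactly $g$ one-handles does not exist in general, and the bound $\operatorname{gens}H_1(X)\leq g$ does not follow. Both load-bearing steps therefore fail, and the argument would need to be rebuilt along genuinely different lines (Chen's actual proof does not proceed by this naive Mayer--Vietoris count on the branched cover of $S^4$).
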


In the same way we were able to define equivariant slicing handlebodies and surfaces, we now define \textit{equivariant superslicing handlebodies} and \textit{equivariant superslicing surfaces}.

\begin{definition}
    Given a strongly invertible knot $(K,\tau,h)$, an \textit{equivariant superslicing handlebody} $H$ of $(K,\tau,h)$ is a superslicing handlebody which is also an equivariant slicing handlebody. We call $\partial H$ an \textit{equivariant superslicing surface} for $(K,\tau,h)$.
\end{definition}

The construction used in Section \ref{sec: EDSG} to guarantee the existence of an equivariant slicing handlebody also guarantees the existence of an equivariant superslicing handlebody. Thus, we can define the \textit{equivariant super-slice genus} of a directed strongly invertible knot $(K,\tau,h)$, denoted $\tilde{g}_{ss}(K,\tau,h)$.

\begin{definition}
    The \textit{equivariant super-slice genus} of a strongly invertible knot $(K,\tau,h)$ is the minimal genus of an equivariant superslicing surface for $(K,\tau,h)$. If  $\tilde{g}_{ss}(K,\tau,h)=0$, we say $(K,\tau)$ is \textit{equivariantly super-slice}.
\end{definition}

As in the equivariant double-slice case, when considering a non-directed strongly invertible knot $(K,\tau)$, we define the \textit{equivariant super-slice genus} of $(K,\tau)$ to be the minimum of the equivariant super-slice genus of the two directed strongly invertible knots obtainable from $(K,\tau)$.

Our primary bound for $\tilde{g}_{ss}(K,\tau,h)$ follows exactly as in the equivariantly double-slice case:

\begin{theorem} \label{thm: super-slice}Let $(K,\tau,h)$ be a directed strongly invertible knot and $K_0$ be an arc of $K$ union $h$. Then $g_{ss}(K_0)\leq \tilde{g}_{ss}(K,\tau,h)$. 
\end{theorem}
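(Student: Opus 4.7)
The plan is to mirror the proof of Theorem~\ref{thm: ds bound}, with additional control over the super-slicing reflection. Let $H$ be a minimal equivariant superslicing handlebody for $(K,\tau,h)$, so that $\partial H$ is the double of a surface $F\subset B^4$ along $K$ via a reflection $\rho$ of $S^4$ across the equatorial $S^3$. My first step is to arrange compatibility of the two involutions: by choosing $\overline{\tau}$ on $B^4_+$ first and then extending to $B^4_-$ as $\rho\circ\overline{\tau}|_{B^4_+}\circ\rho$, one may assume $\overline{\tau}$ and $\rho$ commute. Combined with the $\rho$-invariance of $\partial H$, I would then argue that $H$ itself can be taken $\rho$-invariant without increasing its genus --- for instance by replacing $H$ with the $\rho$-symmetric handlebody built from the doubling data on $F$.

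With these preparations in place, the cutting argument from Theorem~\ref{thm: ds bound} applies verbatim: Proposition~\ref{prop: decomp} provides the vertical--horizontal decomposition of $H$, and the fixed point set $\mathrm{fix}(\overline{\tau})\cap H$ is a disjoint union of planar surfaces which separates $H$ into two handlebodies $H^1$ and $H^2=\overline{\tau}(H^1)$ satisfying $g(H^i)\leq g(H)$, each of which is a splitting handlebody for the knot $K_0$ formed from the chosen arc of $K$ together with $h$.

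The new content, relative to the double-slice proof, is to verify that $\rho$ preserves each $H^i$ rather than swapping them, so that $\partial H^i$ is $\rho$-invariant and is therefore a super-slicing surface for $K_0$. I would do this with a local tangential analysis at a point $p\in h$: since $\rho$ fixes $S^3$ pointwise while $H$ meets $S^3$ transversely with $h$ lying in the intersection, the line normal to $\mathrm{fix}(\overline{\tau})\cap H$ inside $H$ lies in $TS^3$ at $p$ and is therefore fixed by $\rho$. This forces $\rho$ to preserve each side of the cutting locus in $H$, yielding $\rho(H^i)=H^i$ and hence $\rho(\partial H^i)=\partial H^i$. Combining the steps gives
\[
g_{ss}(K_0)\;\leq\;g(\partial H^1)\;\leq\;g(\partial H)\;=\;\tilde{g}_{ss}(K,\tau,h).
\]
I expect the main obstacle to be the first step, namely verifying that the minimal equivariant superslicing handlebody can be taken $\rho$-invariant without loss of genus; once both symmetries act on a common handlebody, the cutting argument and the local analysis near $h$ are comparatively routine.
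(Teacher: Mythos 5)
Your core strategy matches the paper's: the paper proves this theorem in a single sentence, by running the cutting argument of Theorem \ref{thm: ds bound} on a minimal equivariant superslicing handlebody, and your middle step is exactly that. You have also correctly isolated the one piece of genuinely new content that the paper leaves implicit: after cutting $H$ along $\mathrm{fix}(\overline{\tau})\cap H$, the half $\partial H^1$ is certainly an unknotted surface meeting $S^3$ in $K_0$, which by itself only gives $g_{ds}(K_0)\leq \tilde{g}_{ss}(K,\tau,h)$; to get the super-slice bound one must additionally exhibit $\partial H^1$ as the double of a surface in $B^4$ along $K_0$. Your step 3 (the tangential analysis at a point of $h$ showing the doubling reflection $\rho$ preserves each side of the cutting locus, hence each $H^i$) is sound \emph{given} that $\rho(H)=H$ and that $\rho$ preserves $\mathrm{fix}(\overline{\tau})\cap H$.

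The gap is in your first step, and you have flagged it yourself as the main obstacle. Redefining $\overline{\tau}$ on $B^4_-$ as $\rho\circ\overline{\tau}|_{B^4_+}\circ\rho$ produces a map that need not preserve $H\cap B^4_-$: the superslicing hypothesis only gives $\rho(\partial H)=\partial H$, not $\rho(H)=H$ (a codimension-two surface in $S^4$ does not separate, so $\rho(H)$ can be a genuinely different handlebody with the same boundary), and hence the modified involution can fail to keep $H$ invariant. Likewise, ``replacing $H$ with the $\rho$-symmetric handlebody built from the doubling data on $F$'' presumes that a $\rho$-invariant unknotted surface bounds a $\rho$-invariant handlebody of the same genus --- an equivariant-unknottedness assertion of precisely the kind this paper shows can fail for other involutions --- and even granting it, the replacement handlebody must simultaneously remain $\overline{\tau}$-invariant, which your construction does not guarantee. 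Until both symmetries are realized on a single handlebody, the doubling structure on $\partial H^1$ is not established and the conclusion $g_{ss}(K_0)\leq g(\partial H^1)$ does not follow. (The paper's own one-line proof is silent on this point as well; you have surfaced the right issue but not closed it.)
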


The proof of this is exactly the proof of Theorem \ref{thm: Main}, taking the equivariant slicing handlebody to be an equivariant superslicing handlebody. 

\begin{example}
    Using Theorem \ref{thm: super-slice}, one can check that the super-slice knot $J$ in Figure \ref{fig:Super Block} is not equivariantly super-slice with the blue half-axis. When considered with the red axis, it is possible that $a(J,\tau,h)$ is equivariantly super-slice, making it a possible example answering Question \ref{question: antipode} in the negative. 
\end{example}

\begin{figure}[h]
    \centering
    \includegraphics[width=.34\linewidth, angle = -90]{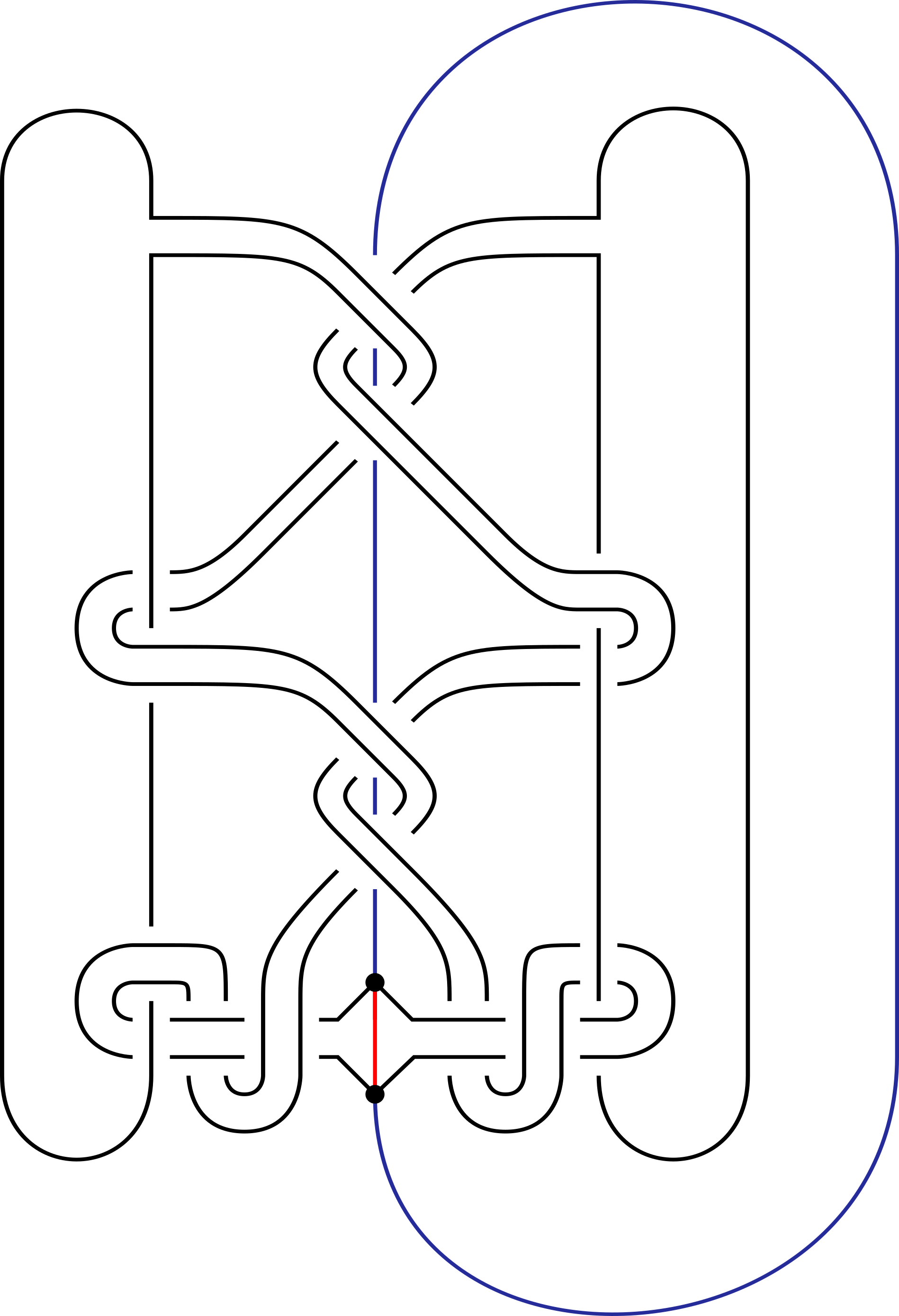}
    \caption{Super-slice knot $(J,\tau,h)$ (blue) and $a(J,\tau,h)$ (red).}
    \label{fig:Super Block}
\end{figure}

\subsection{Symmetrically Knotted 2-spheres} \label{sec: isotopy} For strongly invertible knots, Marumoto \cite{yoshihiko1977relations} proves that any two strongly invertible knots which are isotopic to the unknot are equivariantly isotopic to each other. We will show that this does not extend to equivariantly unknotted 2-spheres in $(S^4,\tau)$. We create symmetric 2-spheres that are unknotted in the classical sense (bound a 3-ball) but do not bound any symmetric 3-ball, contrasting with the properties of strongly invertible knots in $S^3$.

% To do this, we first define two \textit{standard equivariant 3-ball's} $B^3_{\pm} \subset S^4$ to be $B^3_{\pm}=\{(0,x_2,x_3,x_4,x_5) \subset S^4 \, | \, \pm x_4 \geq 0\}$. While the choice of $x_1=0$ as opposed to $x_2=0$ and $\pm x_4 \geq 0$ as opposed to $\pm x_3\leq 0$ is somewhat arbitrary, one can check that making different choices yields equivariantly isotopic results. These 3-disk's intersect the equatorial $S^3$ transversely in 2-disk $D^2_{\pm}$ with $\tau(D^2_{\pm})=D^2_{\pm}$, i.e. they are slicing handlebodies for the strongly invertible unknot, each one intersecting a different half-axis. 

\begin{proof}[Proof of Theorem \ref{thm: 2-sphere}]
    Consider the knot $(J_1,\tau)=(J,\tau,h)\# a(J,\tau,h)$ depicted in Figure \ref{fig:Weird Sphere}. We will construct a slicing 2-sphere, $S^2$, for $K$ such that $\tau(S^2)=S^2$ is not an equivariant slicing sphere, i.e. $S^2$ will be a symmetric 2-sphere bounding a 3-ball which bounds no invariant 3-ball. 
    
    First, we show that $(J_1,\tau)$ is not equivariantly double-slice (with either half-axis), which means any equivariant 2-sphere it appears as a cross section of cannot bound an equivariant 3-ball. This follows immediately from Theorem \ref{thm: ds bound} as the decomposition gives a connect sum of $6_1$ with itself, which is not double-slice. 
    
    Thus, to complete the proof, all we need is a symmetric 2-sphere for $(J_1,\tau)$ which we know is unknotted. To construct this, we first construct such a sphere for $(J,\tau)$. This sphere is the ribbon 2-sphere constructed by taking the double of the obvious ribbon disk for $J$. Since the ribbon disk is symmetric, so is the ribbon 2-sphere. The fact that this 2-sphere is unknotted comes from elementary moves for ribbon surfaces, i.e. a homotopy of core of the band in Figure \ref{fig:Super Block} to a trivial band between the two circles corresponds to an isotopy of the corresponding ribbon 2-sphere. Thus, $(J,\tau)$ appears as the cross-section of a symmetric unknotted 2-sphere which we call $S$.
    
    Note that $a(J_1,\tau,h)$ also appears as a cross section of the same 2-sphere $S$, as this 2-sphere does not depend on the choice of half-axis. Therefore, taking the equivariant connect sum of these two spheres, we get another symmetric sphere, $S^2$, who's intersection with $S^3$ is $(J_1,\tau,h)\# a(J_1,\tau,h)$. Since this $S^2$ is the connect sum of two unknotted spheres, it itself is unknotted. Thus, this $S^2$ is a symmetric sphere bounding a 3-ball which does not bound any invariant 3-ball. 
    
\end{proof}

\begin{figure}
    \centering
    \includegraphics[width=.65 \linewidth]{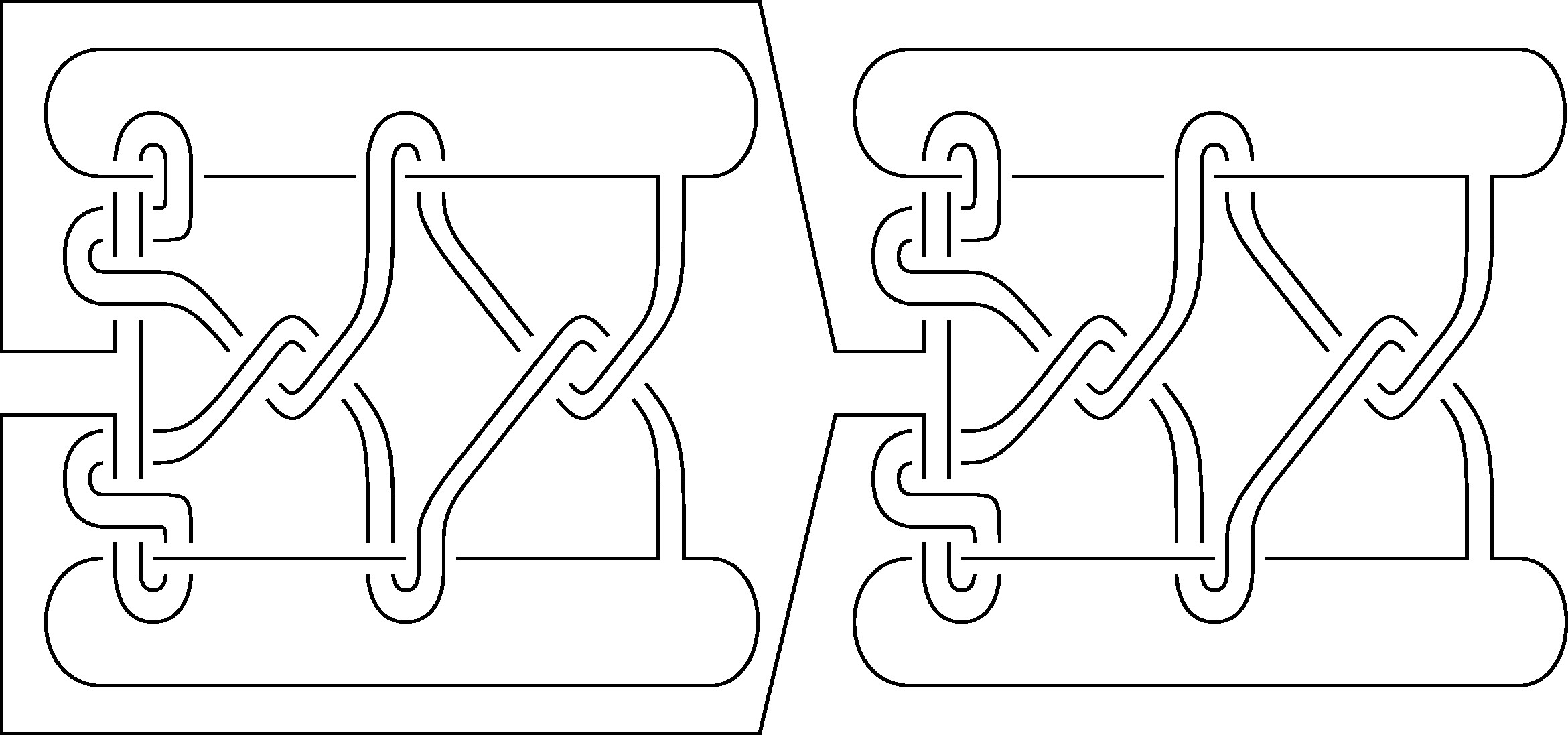}
    \caption{The knot $(J_1,\tau,h)\# a(J_1,\tau,h)$.}
    \label{fig:Weird Sphere}
    \end{figure}

\section{Internal Stabilization Rel Boundary}

In Section \ref{sec:internal}, we describe how to use super-slice genus to obstruct isotopy of surfaces rel boundary and bound their 1-handle stabilization distance. In Section \ref{sec: equivariant stab}, we define a notion of equivariant stabilization distance and extend the results of Section \ref{sec:internal} to this new equivariant setting.

\subsection{Internal Stabilization Bounds from Double Slice Genus}\label{sec:internal}
First, we must define some terms. Let $\Sigma_1,\Sigma_2\subset B^4$ be two properly embedded surfaces with common boundary $K$. The \textit{1-handle stabilization distance} $d_1(\Sigma_1,\Sigma_2)$ from $\Sigma_1$ to $\Sigma_2$ is the minimum number of orientation preserving ambient 1-handles $\{h_i\}$ and $\{h_i'\}$ needed so that $\Sigma_1\cup \{h_i\}$ is smoothly isotopic rel boundary to $\Sigma_2 \cup \{h_i'\}$, as defined in \cite{miller2020stabilization}. With this we are ready to prove Theorem \ref{thm: stab}

\begin{proof}{Proof of Theorem \ref{thm: stab}}
    Let $d=d_1(\Sigma_1,\Sigma_2)$ and let $\Sigma_1'=\Sigma_1\cup \{h_i\}_{i=1}^d$ and $ \Sigma_2' = \Sigma_2 \cup \{h_i'\}_{i=1}^d$ be stabilized surfaces that are isotopic rel boundary. Since $\Sigma_1\cup_K -\Sigma_2 \subset (B^4,\Sigma_1)\cup_{(S^3,K)}(B^4,\Sigma_2)$ is unknotted, the stabilized surface $\Sigma' = \Sigma_1'\cup_K -\Sigma_2' \subset (B^4,\Sigma_1')\cup_{(S^3,K)}(B^4,\Sigma_2')$ is also unknotted, as it is an unknotted handlebody $\Sigma_1 \cup_K -\Sigma_2$ union handles. Since $\Sigma_1'$ is isotopic rel boundary to $\Sigma_2'$, we can isotope $\Sigma'$ in $S^4$ rel $B^4$ (the hemisphere containing $\Sigma_1'$) to get the double of $(B^4,\Sigma_1')$. This means that the double of $(B^4,\Sigma_1')$  is a superslicing surface for $K$ and therefore $h+d \geq \frac{g_{ss}(K)}{2}$, i.e. $d\geq \frac{g_{ss}(K)}{2}-h$. Note we have $\frac{g_{ss}(K)}{2}$, not $g_{ss}(K)$, because we want the genus of the surface bounded by the knot, not its double. Thus, $d_1(\Sigma_1,\Sigma_2)\geq \frac{g_{ss}(K)}{2}-h$.
\end{proof}

Letting $K$ be double-slice, we are able to use this to obstruct certain slice disks of $K$ from being isotopic rel boundary, from which Corollary \ref{cor: stab} immediately followl.

In \cite{livingston2015doubly} a survey of double-slice knots with 12 or fewer crossings is conducted and they find 20 knots which are double-slice and provide explicit description of the double slicing via bands. Of these 20 knots, there are 17 with Alexander polynomial not equal to 1, and therefore not super-slice. Thus, for these 17 knots, the band diagrams given in \cite{livingston2015doubly} depict slice discs which are not isotopic rel boundary. This fact has been proven for the first of these 17 knots, $9_{46}$, by others including by Miller and Powell \cite{miller2020stabilization} and Sunderberg and Swann \cite{sundberg2021relative}.

Moreover, of the 17 knots with Alexander polynomial not equal to 1, the following 15 knots have $H_1(\Sigma,\Z)=\Z_n^2$ for some $n$, where $\Sigma$ is the 2-fold branch cover of $S^3$ along one of the knots $K$:

\begin{align*}
    9_{46}, 10_{99}, 10_{123}, 10_{155}, 11n_{74}, 12a_{427}, 12a_{1105}, 12n_{268}, \\ 12n_{397}, 12n_{414}, 12n_{605}, 12n_{636}, 12n_{706}, 12n_{817}, 12n_{838}
\end{align*}

 Thus, if you take the connect sum $\#^m K$, where $K$ is any of the 15 knots, you get that $H_1(\Sigma,\Z)=\Z_n^{2m}$. By Theorem \ref{thm: Chen}, this means that $g_{ss}(\#^m K)\geq 2m$. Combining this fact with Theorem \ref{thm: stab}, we get that any two slice disks arising from the same double slicing of $\#^m K$ have stabilization distance at least $m$. 

\subsection{Equivariant Stabilization}
\label{sec: equivariant stab}

We now define an equivariant notion of 1-handle stabilization distance and reprove Theorem \ref{thm: stab} in this equivariant setting.

\begin{definition}
    Let $\rho$ be a smooth $\Z_p$ action on $B^4$, $K\subset S^3$ be a $\rho$-invariant knot, and $\Sigma_1, \Sigma_2\subset B^4$ be $\tau$-invariant properly embedded surfaces with common boundary $K$. The \textit{equivariant 1-handle stabilization distance}, denoted $\tilde{d}_\rho(\Sigma_1,\Sigma_2)$, is the minimal number of orientation preserving ambient 1-handles $\{h_i\}$ and $\{h_i'\}$ needed so that $\rho(\Sigma_i\cup \{h_i\}) = \Sigma_i\cup \{h_i\} $ for $i\in \{1,2\}$ and $\Sigma_1\cup \{h_i\}$  is smoothly equivariantly isotopic rel boundary to $\Sigma_2 \cup \{h_i'\}$.
\end{definition}

Restricting our attention to the action $\overline{\tau}$ discussed in Section \ref{sec: Equivariant 4-Genus} we can ask about equivariant stabilization distance bounds coming from equivariant double-slice and equivariant super-slice genus. More specifically, we get the statement of Theorem \ref{thm: eq stabilization},
the proof of which follows exactly as in the non-equivariant setting. One interesting question about equivariant stabilization that Theorem \ref{thm: stab} may be useful for is the following:

\begin{question}
    For every $n\in \N$, does there exist $\overline{\tau}$-invariant surfaces $\Sigma_1,\Sigma_2\subset B^4$ which are isotopic but have $\tilde{d}_1^\tau(\Sigma_1,\Sigma_2)=n$? 
\end{question}

\printbibliography

\end{document}